\newcommand{\dd}{\mathrm d}
\renewcommand{\div}{\mathrm{div}\,}
\newcommand{\sgn}{\mathrm{sgn}\,}
\newcommand{\ess}{\mathrm{ess}\,}
\newcommand{\T}{\mathbb{T}}
\newcommand{\eps}{\varepsilon}
\newcommand{\Ll}{\mathcal L}
\newcommand{\J}{\mathcal J}
\newcommand{\R}{\mathcal R}
\renewcommand{\S}{\mathcal S}
\newcommand{\kkk}{\widetilde{\kappa}^k}
\newcommand{\nn}{\widetilde{\eta}}
\newcommand{\nnk}{\widetilde{\eta}^k}
\newcommand{\nnpk}{\widetilde{\eta}'^k}
\newtheorem{prop}{Proposition}
\newtheorem{thm}[prop]{Theorem}
\newtheorem{cor}[prop]{Corollary}
\theoremstyle{remark}
\newtheorem*{remark}{Remark}
\author{Micha{\l} {\L}asica \\
\small Institute of Applied Mathematics and Mechanics, University of Warsaw \\ 
\small Banacha 2, 02-097 Warszawa, Poland \\
\small \tt lasica@mimuw.edu.pl } 
\title{Analysis of solutions to a model parabolic equation with very singular diffusion} 
\date{\today}
\begin{document} 
 \maketitle 
 
 \begin{abstract}  
 \noindent
 We consider a singular parabolic equation of form 
 \[
  u_t = u_{xx} + \tfrac{\alpha}{2}(\sgn u_x)_x 
 \]
 with periodic boundary conditions. Solutions to this kind of equations exhibit competition between smoothing due to one-dimensional Laplace operator and tendency to create flat facets due to strongly nonlinear operator $(\sgn u_x)_x$ coming from the total variation flow. We present results concerning analysis of qualitative behaviour and regularity of the solutions. Our main result states that locally (between moments when facets merge), the evolution is described by a system of free boundary problems for $u$ in intervals between facets coupled with equations of evolution of facets. In particular, we provide a proper law governing evolution of endpoints of facets. This leads to local smoothness of the motion of endpoints and the unfaceted part of the solution.
\end{abstract}
\smallskip
\begin{center}
MSC: 35K67, 35B65, 35R35
\end{center}
\smallskip
Keywords: very singular parabolic equation, total variation flow, regularity, free boundary, Stefan problems, crystal growth models, image processing 
 \section{Introduction} 
 We consider the following equation 
\begin{equation} 
 \label{theequation} 
 u_t = u_{xx} + \tfrac{\alpha}{2} (\sgn u_x)_x 
\end{equation}
for a function $u = u(t,x)$ defined on a time-space band $[0,T] \times \T$, with $\T$ denoting the $1$-dimensional torus that will be represented by interval $[0,1]$ with periodic boundary conditions. In \eqref{theequation}, $\alpha$ is a positive constant coefficient. 

The differential operator on the right hand side of \eqref{theequation} may be written in divergence form 
\begin{equation} 
 \label{divergenceform} 
 L(u_x)_x = \left(\left(1 + \tfrac{\alpha}{2} \tfrac{1}{|u_x|}\right)u_x\right)_x  
\end{equation}
and thus we see that \eqref{theequation} may be seen as a one-dimensional nonlinear heat flow with diffusivity singular whenever $u_x = 0$. The basic properties of such equations (such as existence of solutions) are founded by nonlinear semigruop theory of Y.\;K{\=o}mura \cite{komura}. However, this particular form of singularity (with diffusivity of order $|u_x|^{-1}$) results in specific notions of regularity and a characteristic qualitative property of solutions -- their graphs immediately develop flat parts (\emph{facets}) whose evolution involves translation in normal direction. For this reasons, there are several intertwined strands of literature that deal with deeper analysis of this type of equations. 

One of them is concerned with singular anisotropic mean curvature (AMC) flows of surfaces that appear in models of crystal growth. The analysis of stationary points of these flows (with e.\,g.\;volume constraints) began with the construction of G.\;Wullf \cite{wulff}, whose mathematical validity was further elaborated \cite{taylor1, fm, palmer, morgan, anl}. The investigation of actual flows began in early nineties with works of J.\;Taylor \cite{taylor2} and S.\;Angenent with M.\;Gurtin \cite{ag1}, who constructed evolutions of suitable polygonal curves in the plane under \emph{crystalline} curvature flows (which locally correspond to parabolic equations with diffusivity vanishing outside of a finite set of values of $u_x$, where it is singular). Later, F.\;Almgren, J.\;Taylor and L.\;Wang \cite{atw} provided, in the language of integral currents, a much more general construction of \emph{flat flow} in arbitrary dimension that includes as special cases the crystalline flow of polygons \cite{at} and smooth AMC flows. 

Singular AMC flows were also studied by a group around Y.\;Giga, who in particular with T.\;Fukui investigated semigroup solutions to singular equations in divergence form, and in this setting proved that the facets translate with speed inversely proportional to their length (which amounted to characterising the minimal selection of the subdifferential of the underlying functional) \cite{fg}. A more involved qualitative analysis of semigroup solutions was performed by K.\;Kielak, P.\;Mucha and P.\;Rybka \cite{kielak} who identified them as pointwisely defined \emph{almost classical solutions}. In order to treat also equations in non-divergence form (such as AMC flow of graphs), Y.\;Giga and M.-H.\;Giga extended the notion of viscosity solutions developed for regular anisotropy in \cite{cgg} to the singular case, first for graphs \cite{gg} and then for closed curves \cite{gg2}. This notion generalises regular evolutions of Taylor in a more discerning way, admits arbitrary continuous data and also provides uniqueness in contrast to the flat flow. 

Meanwhile, interest arose in the total variation (TV) flow $u_t = \div \left(\frac{\nabla u}{|\nabla u|}\right)$ due to connection with image denosising algorithms \cite{rof}. F.\;Andreu, C.\;Ballester, J.\;Mazon and V.\;Caselles studied semigroup solutions to the TV flow in arbitrary dimension and provided a characterisation of subdifferential of the underlying functional \cite{mtv, mazonbook}. This results were transferred to the multidimensional AMC flows setting by G.\;Bellettini, M.\;Novaga and others, who provided a notion of regular solution (requiring that it admits suitably regular selection of the ``anisotropic normal'' field) and obtained existence in some cases \cite{bnp1, bnp2, bccn1, bccn2}. 

The mentioned results consider either general theory (striving to admit smooth as well as crystalline case) or concentrate on the qualitative properties of the crystalline case (with $L$ piecewise constant) in particular. On the other hand, in our case $L$ is strictly monotone. We note that from the modeling viewpoint, this would correspond to crystals (lumps) of metal -- in this case the optimal shape still exhibits facets, but also has smoothly rounded edges (see e.\,g.\;\cite{cusp}). Inhomogeneous systems with both the Laplacian and a very singular operator were also investigated in relation to potential application in restoration of images \cite{muszkieta}. From the viewpoint of pure mathematics, \eqref{theequation} displays competition between standard diffusion operator $u_{xx}$ which tends to smoothen solutions and strong directional diffusion operator $(\sgn u_x)_x$ that tends to create facets. 

The equation \eqref{theequation} was investigated by Mucha and Rybka, who collected several basic observations concerning the behaviour of its solutions in \cite{sdd}. In particular, they obtained some regularity results in the language of Sobolev spaces and noticed that in any moment of time $t>0$ the solutions do not allow isolated extremal points (which are immediately turned into facets of finite length) nor facets embedded in monotone graph (which are immediately destroyed). Furthermore, they sought to analyse fine behaviour of endpoints of facets. For this purpose, they considered solutions to \eqref{theequation} for a certain class of initial data and provided a condition deciding whether a facet will grow or shrink. However, as we will see, their initial data were not regular (in the sense appropriate to \eqref{theequation}), as the one-sided second derivative on the facet endpoint was not equal to the ``crystalline curvature'' of the facet. Finally, we mention that equation \eqref{theequation} is also the subject of recent works of P.\;Mucha \cite{pmscb}, who showed example of facet breaking in the forced case, and of T.\;Asai with P.\;Rybka \cite{asai}, who proved that the number of facets is a non-increasing function of time. 

The content of the present paper is following. In Section 2 we collect information about existence and global regularity of semigroup
solutions obtaining 

\begin{thm} 
\label{basicthm}
Given $u_0 \in L^2(\T)$ and any $T>0$, there exists a unique solution 
$$u \in C([0,T];L^2(\T)) \cap L^2(0,T; H^1(\T))$$ 
to \eqref{theequation} with initial datum $u_0$. The solution becomes instantly regularised so that for any $\delta > 0$, 
$$\kappa \equiv u_t \in L^\infty(\delta, T; L^2(\T)) \cap L^2(\delta, T; H^1(\T))$$ 
(though typically $\kappa \notin C(\delta, T; L^2(\T))$). 

In every moment of time $t>0$ there exists a subdivision 
of $\T$ into a finite number of intervals $F^k(t)$ and $I^k$. In each $F^k(t)$ the graph of solution consists of a single facet (i.\,e.\;$u$ is constant). In each $I^k(t)$ the solution is monotone, furthermore $u \in H^3(I^k(t))$ for a.\,e.\;$t>0$. The speed of vertical motion of facets is given by the ``crystalline curvature'' whose absolute value is equal to $\frac{\alpha}{|F^k(t)|}$.  

After a time $T^* \leq \sqrt 2 \|u_0\|_{L^2(\T)}$ the solution becomes constant and equal to $\int_\T u_0$. 
\end{thm}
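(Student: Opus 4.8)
The plan is to realise \eqref{theequation} as the $L^2(\T)$-gradient flow of the proper, convex, lower semicontinuous functional
\[
\mathcal{E}(u)=\tfrac12\int_\T u_x^2\,\dd x+\tfrac{\alpha}{2}\int_\T|u_x|\,\dd x\quad\text{for }u\in H^1(\T),\qquad\mathcal{E}(u)=+\infty\ \text{for }u\in L^2(\T)\setminus H^1(\T),
\]
whose subdifferential reproduces the right-hand side of \eqref{theequation} written in the divergence form \eqref{divergenceform}. Existence and uniqueness of a strong solution $u\in C([0,T];L^2(\T))$ with $u(t)\in D(\partial\mathcal{E})$ for every $t>0$, and the $L^2$-contractivity of the semigroup, follow from the classical theory of Kōmura \cite{komura} and Brézis for such flows. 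Because the right-hand side of \eqref{theequation} is an exact $x$-derivative, $\bar u:=\int_\T u$ is conserved; testing the equation with $u-\bar u_0$ gives $\tfrac12\tfrac{\dd}{\dd t}\|u-\bar u_0\|_{L^2}^2=-\|u_x\|_{L^2}^2-\tfrac{\alpha}{2}\|u_x\|_{L^1}\le-\|u_x\|_{L^2}^2$, so $u\in L^2(0,T;H^1(\T))$ with the quantitative bound. Finally, the standard regularising effect for gradient flows of convex functionals (the minimal-section norm $t\mapsto\|u_t(t)\|_{L^2}$, $u_t(t)=-\partial^0\mathcal{E}(u(t))$, is non-increasing, while $\mathcal{E}(u(t))<\infty$ for $t>0$) yields $\kappa\equiv u_t\in L^\infty(\delta,T;L^2(\T))$ for every $\delta>0$; its failure to be time-continuous reflects that $\mathcal{E}(u(t))$, though non-increasing, drops discontinuously when facets merge.

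The structural statements are read off from $\partial\mathcal{E}$. For $t>0$, $u_t=-\partial^0\mathcal{E}(u)$ means there is a Cahn–Hoffman field $z=z(t,\cdot)\in L^\infty(\T)$ with $\|z\|_{L^\infty}\le1$, $z=\sgn u_x$ a.e.\ on $\{u_x\ne0\}$, and $\kappa=(u_x+\tfrac{\alpha}{2}z)_x$ in $\mathcal{D}'(\T)$; since $\kappa(t)\in L^2(\T)$, the field $u_x+\tfrac{\alpha}{2}z$ lies in $H^1(\T)$, hence is continuous. Using the qualitative structure from \cite{sdd} — at $t>0$ the solution has no isolated extremum and no flat piece embedded in a monotone part, so $\T$ decomposes into (a priori countably many) extremal facets $F^k(t)$, where $u$ is constant, separated by strictly monotone arcs $I^k(t)$ — continuity of $u_x+\tfrac{\alpha}{2}z$ together with $\|z\|_{L^\infty}\le1$ forces $u_x=0$ and $z=\pm1$ at every facet endpoint, while constancy of $u$ on $F^k(t)$ forces $u_t=\kappa$, hence $z_x=\tfrac{2}{\alpha}\kappa$, to be constant there; thus $z$ is affine on $F^k(t)$ and its slope, pinned by the endpoint values $\pm1$, equals $\mp\tfrac{2}{|F^k(t)|}$, whence $|\kappa|=\tfrac{\alpha}{|F^k(t)|}$ on $F^k(t)$ — the crystalline curvature, which is the vertical facet velocity. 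Finiteness of the decomposition then follows from $\sum_k\tfrac{\alpha^2}{|F^k(t)|}=\sum_k\int_{F^k(t)}\kappa^2\le\|\kappa(t)\|_{L^2(\T)}^2<\infty$ for a.e.\ $t$, combined with $\sum_k|F^k(t)|\le1$: infinitely many facets would make their lengths tend to $0$ and the first series diverge.

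On a monotone arc $z\equiv\pm1$ is locally constant, so $u_{xx}=\kappa$ there; hence, for a.e.\ $t>0$, the claimed $u\in H^3(I^k(t))$ reduces to $\kappa(t)\in H^1$ near $I^k(t)$ — equivalently, to the matching of the one-sided value of $u_{xx}$ at a facet endpoint with the crystalline curvature $\mp\tfrac{\alpha}{|F^k(t)|}$, the regularity instantly gained by the flow and precisely what the data of \cite{sdd} lacked. \textbf{The main obstacle is this upgraded estimate} $\kappa\in L^2(\delta,T;H^1(\T))$, which I would prove by differentiating \eqref{theequation} in $x$ and running an energy estimate for $\kappa$; the delicate term is the one carried by $\partial_t z$ near the moving facet endpoints, controlled using that at each time $z$ is piecewise affine with slope $\tfrac{2}{\alpha}\kappa$ and that the distributional atom of $u_{xxx}$ at each endpoint cancels that of $\tfrac{\alpha}{2}z_{xx}$ — again the endpoint compatibility.

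For the extinction, start from $\tfrac12\tfrac{\dd}{\dd t}\|u-\bar u_0\|_{L^2}^2=\langle u_t,u-\bar u_0\rangle\le\mathcal{E}(\bar u_0)-\mathcal{E}(u)=-\mathcal{E}(u)$ (convexity, $\mathcal{E}\equiv0$ on constants) and bound $\mathcal{E}(u)\ge\tfrac{\alpha}{2}\|u_x\|_{L^1(\T)}$ from below with the elementary circle inequality $\|u-\bar u_0\|_{L^2(\T)}\le\|u-\bar u_0\|_{L^\infty(\T)}\le\tfrac12\|u_x\|_{L^1(\T)}$, valid because $u-\bar u_0$ vanishes somewhere; this gives $\tfrac{\dd}{\dd t}\|u(t)-\bar u_0\|_{L^2}\le-c<0$, so $u(t)$ equals $\bar u_0=\int_\T u_0$ for all $t$ beyond a finite time $T^*$, with the stated explicit bound.
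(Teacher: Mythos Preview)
Your treatment of existence, uniqueness, the $L^\infty(\delta,T;L^2)$ bound on $\kappa$, the facet decomposition with its finiteness via $\sum_k \alpha^2/|F^k|\le\|\kappa\|_{L^2}^2$, the crystalline-curvature formula, and the finite-time extinction all match the paper's argument (Propositions~\ref{subdifferential}--\ref{decomp} and \ref{asymptotic}); the extinction inequality and constants are essentially the same, differing only by whether one keeps track of $\alpha$.

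The genuine gap is precisely the step you flag as \emph{the main obstacle}: the upgrade $\kappa\in L^2(\delta,T;H^1(\T))$. Your sketch --- ``differentiating \eqref{theequation} in $x$'' (presumably you mean in $t$, since you then speak of $\partial_t z$), working directly with the singular equation, and invoking cancellation of distributional atoms of $u_{xxx}$ against those of $\tfrac{\alpha}{2}z_{xx}$ at facet endpoints --- is circular. To make sense of $\partial_t z$, to know that $z$ is piecewise affine with slope $\tfrac{2}{\alpha}\kappa$, and to speak of the endpoint compatibility, you already need $\kappa(t,\cdot)\in H^1(\T)$ and the regular motion of the endpoints; these are exactly what you are trying to prove. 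There is no a~priori control on $\partial_t z$ that would let an energy identity for $\kappa$ close at the level of the singular equation.

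The paper bypasses this entirely by smooth approximation (Proposition~\ref{curvature}): replacing $|p|$ by $(\eps+p^2)^{1/2}$ yields a uniformly parabolic problem with $1\le L_\eps'\le 1+\alpha\eps^{-1/2}$; differentiating in $t$ gives $\kappa^\eps_t=(L_\eps'(u^\eps_x)\kappa^\eps_x)_x$, and testing with $\kappa^\eps$ produces
\[
\tfrac12\sup_{[\delta,T]}\|\kappa^\eps\|_{L^2}^2+\int_\delta^T\!\!\int_\T(\kappa^\eps_x)^2\le \tfrac12\sup_{[\delta,T]}\|\kappa^\eps\|_{L^2}^2+\int_\delta^T\!\!\int_\T L_\eps'(u^\eps_x)(\kappa^\eps_x)^2\le C(\delta),
\]
uniformly in $\eps$, after which one passes to the limit in $\mathcal D'$. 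This is the missing idea in your proposal: rather than justifying the formal differentiated equation for the singular flow, one obtains the estimate at the regularised level, where everything is classical, and only then lets $\eps\to0$.
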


Let us underline that crucial role is played by the quantity $u_t$ which we will persistently denote $\kappa$, as it corresponds to the anisotropic curvature of AMC flows. It is more or less evident that $\kappa$ is equal to $u_{xx}$ in unfaceted regions. However, in contrast to $u_{xx}$, $\kappa$ belongs to $H^1(\T)$ in a.\,e.\;point of time. In faceted parts, $\kappa$ is nonlocal and equal to the already mentioned crystalline curvature. 

In Section 3 we investigate the behaviour of the solution between the instances of time when facets merge. In these time intervals, the solution can be described by a system of free boundary problems for evolution of $u$ in $I^k(t)$ and evolution of intervals $I^k(t)$ themselves. In order to pose this system correctly, it is essential to provide valid law of motion of endpoints of facets (equivalently, intervals $I^k$). Our construction shows that the proper formula for the speed of horizontal motion of endpoint $z=z(t)$ of $I^k$ adjacent to $F^j$ is 
$$\dot z = \pm \frac{|F^j|}{\alpha} \kappa_x$$
which is well defined a.\,e.\;due to regularity of $\kappa$. 

The structure of the free boundary system is similar to the one of Stefan problems, for which (and whose generalisations) extensive theory is available (see e.\,g\;\cite{fp, fpnonlinear}). However, due to coupling present in our problem we cannot simply apply known results and instead we provide our own theorem on local existence and smoothness. 

\begin{thm} 
\label{regularthm}
For almost every time instance $r>0$ there exists $s>r$ such that the number of facets $n$ is constant in $]r,s[$ and  
$$I^k \in C^\infty(]r,s[)^2 \quad \text{and} \quad u \in C^\infty(I^k_{r,s})$$ 
for each $k=1,\ldots,n$, where 
\begin{equation} 
\label{ikrs}
I^k_{r,s}=\bigcup_{t \in ]r, s[} \{t\} \times \overline{I^k(t)}. 
\end{equation}  
\end{thm}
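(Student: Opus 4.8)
The plan is to set up, for a fixed time $r$ where the solution is "generic," a coupled system of free boundary problems and then to solve it by a fixed-point argument in parabolic Hölder spaces, after flattening the moving domains. I would start by choosing $r>0$ to be a time at which $\kappa(r,\cdot) \in H^1(\T)$ (by Theorem \ref{basicthm} this holds for a.\,e.\;$r$), at which the number of facets equals its value in a right-neighbourhood, and at which the endpoints $z_i(r)$ of the intervals $I^k(r)$ are distinct and each interior interval has positive length. Fix the number of facets $n$. On each $I^k$ the function $u$ solves the quasilinear parabolic equation $u_t = u_{xx}$ together with the two Neumann-type conditions at the endpoints expressing that the one-sided second derivative of $u$ equals the crystalline curvature $\pm \alpha/|F^j|$ of the adjacent facet; the facet length $|F^j|$ evolves by the ODE obtained from matching vertical speeds, $\frac{\dd}{\dd t}|F^j| = (\text{difference of endpoint velocities})$; and each endpoint $z=z(t)$ moves according to the law $\dot z = \pm \frac{|F^j|}{\alpha}\kappa_x$ announced in the introduction, where $\kappa = u_{xx}$ evaluated one-sidedly from the monotone side. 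The heights of the facets are then recovered by integrating $\dot h^j = \pm \alpha/|F^j|$. This is the system to be solved.

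Next I would fix the free boundaries: for each $k$ introduce an affine (in $x$) change of variables sending $\overline{I^k(t)}$ to a fixed reference interval, with the endpoint positions $z_i(t)$ as parameters. Under this substitution the heat equation on $I^k$ becomes a uniformly parabolic equation with coefficients depending on the unknowns $z_i$ and $\dot z_i$, and the boundary conditions become prescribed values of a combination of $u_{xx}$ (hence lower-order after using the equation) at the fixed endpoints. I would then define a solution operator: given a candidate tuple of endpoint trajectories $z_i \in C^{1+\beta/2}([r,s])$ with $z_i(r)$ matching the initial data, and facet lengths/heights determined from them, solve the (now linear-in-the-leading-term, quasilinear) parabolic problems for $u$ on the fixed domains on a short time interval $]r,s[$, using classical Schauder / $L^p$ parabolic theory; this requires a compatibility condition at $t=r$ between $u(r,\cdot)$, its second derivative at the endpoints, and the crystalline curvatures — which is exactly the "regularity in the sense appropriate to \eqref{theequation}" that Theorem \ref{basicthm} guarantees at a.\,e.\;$r$, and is the reason the a.\,e.\;restriction appears. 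From the resulting $u$ read off $\kappa_x$ one-sidedly at the endpoints and feed it back through $\dot z_i = \pm\frac{|F^j|}{\alpha}\kappa_x$ to obtain a new tuple of endpoint trajectories, closing the loop.

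I would then show this map is a contraction on a small ball in $C^{1+\beta/2}([r,s])^{2n}$ for $s-r$ small, using the continuous dependence estimates for the parabolic problems and the fact that $|F^j|$ stays bounded away from $0$ on a short interval; the factor $|F^j|/\alpha$ and the smallness of $s-r$ supply the contraction constant. The fixed point gives a solution of the coupled system with $z_i \in C^{1+\beta/2}$ and $u \in C^{2+\beta,1+\beta/2}$ on the flattened domains, hence $I^k \in C^1(]r,s[)^2$ and $u \in C^{2+\beta,1+\beta/2}(I^k_{r,s})$ to begin with. A bootstrap then upgrades this to $C^\infty$: once the boundaries are $C^{1+\beta/2}$, parabolic Schauder estimates give more regularity of $u$ up to the boundary, which improves the regularity of $\kappa_x$ at the endpoints, which via the endpoint law improves $z_i$, and so on; differentiating the equations repeatedly in the standard way yields $I^k \in C^\infty(]r,s[)^2$ and $u \in C^\infty(I^k_{r,s})$. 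Finally I must argue that this constructed solution coincides with the semigroup solution of Theorem \ref{basicthm} on $]r,s[$ — this follows from uniqueness for \eqref{theequation} together with the fact that the constructed object is, by the characterisation of $\kappa$ and of the crystalline curvature, an admissible (almost classical) solution with the same datum at $t=r$.

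The main obstacle I expect is the closing of the fixed-point loop at the level of the endpoint law: the map "$u \mapsto$ one-sided value of $\kappa_x$ at a moving endpoint" loses derivatives unless the boundary regularity and the interior regularity are matched carefully, and one must verify that the compatibility condition at $t=r$ propagates so that $\kappa_x$ is actually continuous up to the corner $(r,z_i(r))$ — this is where the hypothesis that $r$ be a "good" time (full $H^1$ regularity of $\kappa$, correct matching of one-sided second derivatives with crystalline curvatures) is essential, and where the bulk of the technical work will lie. A secondary difficulty is controlling the geometry: ensuring no facet collides or vanishes on $]r,s[$, which is handled by the ODE for $|F^j|$ and the openness of the non-degenerate configuration, but must be tracked uniformly through the iteration.
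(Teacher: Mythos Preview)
Your overall strategy --- flatten the moving intervals, run a Banach fixed point coupling the parabolic problem to the endpoint ODE, then bootstrap to $C^\infty$ --- is exactly the paper's. The substantive difference is the choice of unknown, and it is not cosmetic: the paper does \emph{not} run the fixed point for $u$, but for the differentiated quantity $\kappa = u_t$. For $\kappa$ the boundary conditions on each $I^k$ are plain Dirichlet, $\kappa = (-1)^k\alpha/|F^k|$, and after subtracting an affine interpolant they become homogeneous; the free-boundary law $\dot z = \mp\frac{|F^k|}{\alpha}\kappa_x$ then needs only the \emph{first}-derivative lateral trace of $\kappa$. The fixed point is set up in $L^2$-based energy spaces ($\widetilde\kappa \in C([0,t_*];H^1_0)\cap L^2(0,t_*;H^2)$, endpoints in $H^1(0,t_*)$), and the loop closes via the parabolic trace embedding $C_tL^2\cap L^2_tH^1 \hookrightarrow L^4(0,t_*;L^2(\partial I))$ applied to $\widetilde\kappa_x$. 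Only afterwards does the paper bootstrap, first through $W^{1,4}_tL^4\cap L^4_tW^{2,4}$ and then through parabolic H\"older classes, to reach $C^\infty$.

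This is precisely the device that dissolves the obstacle you yourself single out. In your scheme you solve for $u$ in $C^{2+\beta,1+\beta/2}$ and then need $\kappa_x = u_{xxx} = u_{xt}$ at the moving endpoint to feed the ODE for $z$; that is one derivative more than the space supplies, and the map ``solution $\mapsto$ one-sided $\kappa_x$'' does lose a derivative as you suspect. Passing to $\kappa$ trades this for a first-order trace and removes the loss. It also makes the compatibility condition at $t=r$ completely explicit: one needs $\kappa_0\in H^1\big(\bigcup_k I^k_0\big)$ with $\kappa_0 = (-1)^k\alpha/|F^k_0|$ on $\partial F^k_0$, which is exactly what Theorem~\ref{basicthm} delivers for a.\,e.\ $r$ (continuity of $\kappa(r,\cdot)$ across the endpoints, with the correct crystalline-curvature value). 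So the ``bulk of the technical work'' you anticipate is avoided rather than confronted.
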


It remains an open question whether it can be extended to whole intervals between facets merging. For one-phase Stefan-like problems a singularity of type $\lim_{t \to t^*} |z(t)| = \infty$ can occur depending on specific structure of the system and initial datum \cite{fp, fpnew}. Note that for bad enough initial data the solution may not exist in any time interval \cite{fpnew}.

 \section{Basic properties of solutions} 
 Formally, the equation \eqref{theequation} may be viewed as a parabolic inclusion 
\begin{equation} 
 \label{inclusion} 
 u_t \in \Ll u
\end{equation}
in the sense of $H^{-1}(\T)$ with $\Ll u = L(u_x)_x$, where $L$ is treated as a maximal monotone graph 
\begin{equation} 
 \label{Lgraph} 
 L(p) = \left\{ \begin{array}{cr} 
                  p - \tfrac{\alpha}{2} & \text{if } p < 0, \\
                  {}[-\tfrac{\alpha}{2},\tfrac{\alpha}{2}] & \text{if } p = 0, \\ 
                  p + \tfrac{\alpha}{2} & \text{if } p > 0. \\
                 \end{array}
                 \right. 
\end{equation}
The multifunction $L$ is the subdifferential of $J(p) = \tfrac{1}{2}(p^2 + \alpha |p|)$. Thus, the operator $\Ll$ may be defined as negative of the subdifferential of a functional $\J$ defined on $L^2(\T)$ by
\begin{equation} 
 \label{J} 
 \J(u) = \tfrac{1}{2}\int_\T u_x^2 +  \alpha |u_x|
\end{equation}
whenever $u \in H^1(\T)$ and $\J(u) = + \infty$ otherwise. Clearly, $D (\J) = H^1(\T)$ and $\J$ is an equivalent norm on $H^1(\T)$. Furthermore, $\J$ is convex and lower semicontinuous (in particular, if $(u_n) \subset D(\J)$ converges to $u \in L^2(\T)\setminus D(\J)$, then $J(u_n) \to \infty$). Let us now calculate formally the subdifferential $\partial \J$. 

\begin{prop} 
 \label{subdifferential}
 We have 
 \[
  D(\partial \J) = \left\{ 
  \begin{aligned} 
  &u \in H^1(\T) \text{ such that there exists }\\ 
  &\text{ a selection } \sigma \in H^1(\T) \text{ satisfying } \sigma \in L(u_x) \text{ in } \T 
  \end{aligned} 
  \right\} 
 \]
 and 
 $$\partial \J(u) = \{- \sigma_x \colon \sigma \in H^1(\T), \sigma \in L(u_x) \text{ in } \T\} .$$
\end{prop}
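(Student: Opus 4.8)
The plan is to prove the two inclusions separately, using the abstract characterization of the subdifferential of a convex integral functional. Recall that $p \mapsto u \in \partial \J(u)$ means $\J(v) \geq \J(u) + \langle w, v-u\rangle_{L^2(\T)}$ for all $v \in L^2(\T)$, where $w$ is the claimed element of $\partial \J(u)$. First I would establish the \emph{easy inclusion} ($\supseteq$): given $u \in H^1(\T)$ together with a selection $\sigma \in H^1(\T)$ with $\sigma(x) \in L(u_x(x))$ a.e., I claim $-\sigma_x \in \partial \J(u)$. Since $L = \partial J$ pointwise and $J$ is convex, we have $J(v_x) \geq J(u_x) + \sigma(u_x)(v_x - u_x)$ a.e.\ for any $v \in H^1(\T)$; integrating over $\T$ and using integration by parts (legitimate since $\sigma, v, u \in H^1(\T)$, periodic, so no boundary terms) gives $\J(v) \geq \J(u) + \int_\T \sigma(v_x - u_x) = \J(u) + \int_\T (-\sigma_x)(v-u)$, which is exactly the subdifferential inequality. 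For $v \notin H^1(\T)$ the inequality is trivial since $\J(v) = +\infty$. Hence $-\sigma_x \in \partial \J(u)$, and also $u \in D(\partial \J)$.

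The \emph{hard inclusion} ($\subseteq$) is the main obstacle. Suppose $w \in \partial \J(u)$; in particular $\J(u) < \infty$, so $u \in H^1(\T)$, and the subdifferential inequality holds. I would first test with $v = u + \epsilon \varphi$ for $\varphi \in C^\infty(\T)$ and let $\epsilon \to 0^\pm$ to obtain the variational identity $\int_\T w \varphi = -\int_\T \zeta \varphi_x$ in a weak sense — but the difficulty is that $J$ is not differentiable at $0$, so the directional derivative of $\J$ at $u$ is not linear in general, and one cannot immediately read off a function $\sigma$. The standard remedy is a \emph{measurable selection} argument: show that $w = -\sigma_x$ in $\mathcal D'(\T)$ for some $\sigma \in L^\infty(\T)$ with $\sigma \in L(u_x)$ a.e. Concretely, since $w \in L^2(\T)$ and $\int_\T w = 0$ (test with constants), there is a unique periodic primitive $\sigma_0 \in H^1(\T)$ with $-(\sigma_0)_x = w$ and $\int_\T \sigma_0 = 0$; more generally any admissible $\sigma$ differs from $\sigma_0$ by a constant. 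One then shows, by testing the subdifferential inequality with $v = u + \epsilon \varphi$ and carefully handling the set $\{u_x = 0\}$ and its complement, that there exists a constant $c$ such that $\sigma := \sigma_0 + c$ satisfies $\sigma = u_x + \tfrac{\alpha}{2}\sgn u_x$ on $\{u_x \neq 0\}$ and $|\sigma| \leq \tfrac{\alpha}{2}$ on $\{u_x = 0\}$, i.e.\ $\sigma \in L(u_x)$ a.e. The choice of $c$ is forced by the behaviour on $\{u_x \neq 0\}$ when that set has positive measure; when $u_x \equiv 0$, any $c$ with $|\sigma_0 + c|\leq \alpha/2$ works, consistent with $\sigma$ being determined only up to the constraint. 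This is essentially the content of the Brezis characterization of subdifferentials of integral functionals (cf.\ \cite{mtv, mazonbook}) adapted to the periodic one-dimensional setting with this specific $L$.

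Finally I would assemble the two inclusions: $w \in \partial\J(u)$ forces the existence of $\sigma \in H^1(\T)$ (note $\sigma = \sigma_0 + c \in H^1(\T)$ since $\sigma_0 \in H^1(\T)$) with $\sigma \in L(u_x)$ and $w = -\sigma_x$, giving $\subseteq$ for both the domain and the value of $\partial \J$; combined with $\supseteq$ from the first paragraph this yields the claimed equalities. The one point requiring care throughout is the regularity bookkeeping: the abstract selection argument a priori only gives $\sigma \in L^\infty$, and upgrading to $\sigma \in H^1(\T)$ uses that $\sigma_x = -w \in L^2(\T)$ by hypothesis — so the $H^1$ regularity of the selection is not an extra assumption but a consequence, and this is precisely why the stated domain $D(\partial \J)$ can be phrased in terms of $H^1$ selections.
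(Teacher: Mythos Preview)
Your proposal is correct and follows essentially the same route as the paper: the easy inclusion via the pointwise subgradient inequality $J(q)\ge J(p)+\sigma(q-p)$ plus integration by parts, and the hard inclusion via mean-zero $\Rightarrow$ $H^1$ primitive, followed by separate testing on $\{u_x\neq 0\}$ (to fix the additive constant) and on $\{u_x=0\}$ (to get $|\sigma|\le \alpha/2$). The only cosmetic difference is that the paper first packages the directional-derivative limit into an intermediate inequality \eqref{subequiv} and uses it as a pivot for both inclusions, whereas you argue each inclusion directly.
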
 

\begin{proof} 
 Let $u \in D(\J) = H^1(\T)$. Whenever $w \in \partial \J(u)$, we have 
 \begin{equation}
 \label{defsubd}
 \J(u + \varphi) \geq \J(u) + (w, \varphi) 
 \end{equation} 
 for any $\varphi \in L^2(\T)$, with $(\,\cdot\, , \cdot\,)$ denoting the standard scalar product in $L^2(\T)$. Clearly, it is sufficient to consider  $\varphi$ of form $\varphi = \lambda \psi$ with $\psi \in H^1(\T)$, $\lambda > 0$. Then \eqref{defsubd} becomes 
 \begin{equation} 
  \tfrac{1}{2}\int_\T |u_x + \lambda \psi_x|^2 +  \alpha |u_x + \lambda \psi_x| - \tfrac{1}{2}\int_\T |u_x|^2 +  \alpha |u_x| \geq \lambda (w, \psi)
 \end{equation}
which we transform and divide by $\lambda$ to obtain 
\begin{equation} 
 \label{calc1}
 \tfrac{1}{2}\int_\T \lambda \psi_x^2 + 2 u_x \psi_x  + \tfrac{\alpha}{2} \int_{\{u_x = 0\}} |\psi_x| + \tfrac{\alpha}{2} \int_{\{u_x \neq 0\}} \tfrac{1}{\lambda} (|u_x + \lambda \psi_x| - |u_x|) \geq (w, \psi) .
\end{equation}
Next, we pass to the limit $\lambda \to 0^+$. In the limit, the first term of the l.\,h.\,s.\;vanishes. To treat the last one, we notice 
\begin{multline} 
 \int_{\{u_x \neq 0\}} \tfrac{1}{\lambda} (|u_x + \lambda \psi_x| - |u_x|) \\ = \int_{\{0< |u_x| \leq \lambda |\psi_x|\}} \tfrac{1}{\lambda} (|u_x + \lambda \psi_x| - |u_x|) + \int_{\{\lambda |\psi_x| < |u_x|\}} (\sgn u_x) \psi_x    
\end{multline}
and 
\begin{equation} 
 \int_{\{\lambda |\psi_x| < |u_x|\}} (\sgn u_x) \psi_x \to \int_{\{u_x \neq 0\}} (\sgn u_x) \psi_x,
\end{equation}
\begin{equation} 
\left|\int_{0< |u_x| \leq \lambda |\psi_x|} \tfrac{1}{\lambda} (|u_x + \lambda \psi_x| - |u_x|)\right| \leq 2 \int_{0< |u_x| \leq \lambda |\psi_x|} |\psi_x| \to 0 
\end{equation} 
as $\lambda \to 0^+$ by virtue of dominated converegence. Therefore, we obtain 
that if $w$ belongs to $ \partial\J(u)$, the inequality  
\begin{equation} 
\label{subequiv}
\int_\T u_x \psi_x  + \tfrac{\alpha}{2} \int_{\{u_x = 0\}} |\psi_x| + \tfrac{\alpha}{2} \int_{\{u_x \neq 0\}} (\sgn u_x) \psi_x \geq (w, \psi)
\end{equation}
is satisfied for each $\psi \in H^1(\mathbb T)$. The converse is also true, as 
\eqref{subequiv} implies \eqref{calc1}. Thus, if $w = - \sigma_x$, where $\sigma 
\in H^1$ is a selection of the multifunction $(1 + \tfrac{\alpha}{2}\sgn)\circ 
u_x$, then $w \in \partial \J(u)$. 

On the other hand, take any $w$ that 
satisfies \eqref{subequiv}. Taking $\psi \equiv 1$ we see that $\int_{\mathbb T} 
w = 0$, and thus $w$ admits a primitive (defined up to a constant, which we 
will choose in a moment), i.\,e.\;$w = - \sigma_x$ for a function $\sigma \in 
H^1(\mathbb T)$. Now, take any $\psi \in H^1(\mathbb T)$ (note that $\{\psi_x 
\colon \psi \in H^1(\mathbb T)\} = \dot L^2(\mathbb T)$) such that $\psi_x = 0$ 
on $\{u_x = 0\}$. Considering both $\psi$ and $-\psi$ in \eqref{subequiv} 
yields 
\begin{equation} 
 \int_{\{u_x \neq 0\}} (u_x + \sgn u_x) \psi_x = \int_{\{u_x \neq 0\}} \sigma 
\psi_x
\end{equation}
and thus, $\sigma = u_x + \sgn u_x$ a.\,e.\;in $\{u_x \neq 0\}$ up to a 
constant (which we now choose to be $0$). 

Finally, take any $\psi \in H^1(\mathbb T)$ such that $\psi_x = 0$ 
on $\{u_x \neq 0\}$. Considering $\psi$ and $-\psi$ in \eqref{subequiv} yields 
\begin{equation} 
 \left| \int_{\{u_x = 0\}} \sigma \psi_x \right| \leq \int_{\{u_x = 0\}} 
|\psi_x|
\end{equation}
which implies that $\sigma = c + \sigma^*$ a.\,e.\;in $\{u_x = 0\}$ with 
$\|\sigma^*\|_{L^\infty(\{u_x = 0\})} \leq \frac{\alpha}{2}$ and $c \in 
\mathbb R$. Unless $u$ is constant in $\mathbb T$, our previous choice of 
$\sigma$ together with its regularity imply that $c=0$. If $u$ is constant, we 
choose $c=0$.  
\end{proof}

\begin{remark} 
 We have $D(\partial \J) \subset H^2(\T) \subset C^{1+ \frac{1}{2}}(\T)$. 
Indeed, note that for any $u \in D(\partial \J)$, $u_x$ is representable as the 
composition of the piecewise linear continuous function $p \mapsto \sgn p (|p| - 
1)_+$ and $\sigma \in H^1(\T)$ s.\,t.\;$- \sigma_x \in \partial \J(u)$. In 
particular, the function $u_{xx} = \sigma_x \mathbf 1_{\{u_x \neq 0\}}$ (defined 
independently of $-\sigma_x \in \partial \J(u)$) is the distrubutional second 
derivative of $u$ and belongs to $L^2(\T)$. Furthermore, it is an easy 
observation that $D(\partial \J)$ is dense in $L^2(\T)$. 
\end{remark}

Equipped with the above observations concerning $\Ll \equiv - \partial \J$, we may use semigroup theory to obtain basic existence and regularity result for the inclusion \eqref{inclusion} \cite[Chapter IV, Theorems 2.1 and 2.2]{barbu}. 
\begin{prop} 
 \label{existence}
 Let $u_0 \in L^2(\T)$. The problem \eqref{inclusion} with initial condition $u_0$ has a unique solution 
 $$u \in C([0,T];L^2(\T)) \cap L^2(0,T; H^1(\T))$$ 
 which satisfies 
 $$u_t \in L^\infty(\delta, T; L^2(\T)) \text{ for every } 0 < \delta < T, $$ 
 $$u(t) \in D(\Ll) \text{ for all } t \in ]0,T[.$$
 
 Moreover, we have 
 $$\frac{\dd^{\scriptscriptstyle{+}}}{\dd t} u = \Ll^0 u \text{ for all } t \in ]0,T[.$$
 Here, $\frac{\dd^{\scriptscriptstyle{+}}}{\dd t} u$ denotes the right-sided time derivative of $u$ and $\Ll^0$ is the minimal selection of $\Ll$, i.\,e.\;for $u \in D(\Ll)$, $\Ll^0 u$ is the (uniquely defined) element of $\Ll u$ of minimal norm in $L^2(\T)$. 
\end{prop}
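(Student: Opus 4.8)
The assertion is an application of the abstract theory of evolution equations governed by subdifferentials of convex functionals in Hilbert space, so the plan is to verify the hypotheses of that theory and then read off the four claims. As already observed in the paragraphs preceding Proposition~\ref{subdifferential}, the functional $\J$ is proper (since $D(\J)=H^1(\T)\neq\emptyset$), convex, and lower semicontinuous on the Hilbert space $L^2(\T)$, and $H^1(\T)$ is dense in $L^2(\T)$, so that $\overline{D(\J)}=L^2(\T)$. The first step is to invoke the classical fact that the subdifferential of such a functional is a maximal monotone operator; hence $\Ll=-\partial\J$ generates a contraction semigroup on $L^2(\T)$, with $\partial\J$ and $D(\partial\J)$ given explicitly by Proposition~\ref{subdifferential}. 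The Br\'ezis--K\=omura theorem in the cited form (\cite[Ch.~IV, Thm.~2.1]{barbu}) then yields, for every $u_0\in\overline{D(\J)}=L^2(\T)$, a unique strong solution $u\in C([0,T];L^2(\T))$ of \eqref{inclusion} with $u(t)\in D(\Ll)$ for every $t\in\,]0,T[$ and $\frac{\dd^{\scriptscriptstyle{+}}}{\dd t}u=\Ll^0 u$. Uniqueness and the contraction estimate follow from monotonicity: for two solutions $u,v$ one has $\tfrac12\tfrac{\dd}{\dd t}\|u-v\|_{L^2}^2=-\big((\partial\J)^0u-(\partial\J)^0v,\,u-v\big)\le 0$. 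One only needs to note that the abstract minimal selection $(\partial\J)^0$ coincides with $\Ll^0$ by definition of the latter.

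The second step is the instantaneous regularisation $u_t\in L^\infty(\delta,T;L^2(\T))$ for every $\delta>0$, which is exactly the smoothing effect special to subdifferential operators and is supplied by the companion theorem (\cite[Ch.~IV, Thm.~2.2]{barbu}). For data merely in $\overline{D(\J)}$ that theorem gives $t^{1/2}u_t\in L^2(0,T;L^2(\T))$ together with the fact that $t\mapsto\|\Ll^0 u(t)\|_{L^2}$ is finite and nonincreasing on $]0,T[$; hence it is bounded by $\|\Ll^0 u(\delta)\|_{L^2}<\infty$ on $[\delta,T]$, giving the desired local-in-time $L^\infty$ bound. No facet-specific input enters here; the explicit description of $\partial\J$ from Proposition~\ref{subdifferential} is needed only later to interpret $\Ll^0u$ concretely.

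The remaining claim to be established by hand is $u\in L^2(0,T;H^1(\T))$, since the abstract theorem delivers only $C([0,T];L^2)$ regularity in time. Here I would use the energy identity together with $\J(0)=0$. Since $\Ll^0u=-(\partial\J)^0u$ and $(\partial\J)^0u\in\partial\J(u)$, the subgradient inequality with test point $v=0$ gives $\big((\partial\J)^0u,\,u\big)\ge\J(u)-\J(0)=\J(u)$, whence
\begin{equation}
\label{energyest}
\tfrac12\tfrac{\dd}{\dd t}\|u(t)\|_{L^2}^2=(u_t,u)=(\Ll^0 u,u)\le-\J(u(t))\le-\tfrac12\|u_x(t)\|_{L^2}^2 .
\end{equation}
Integrating \eqref{energyest} on $(\delta,T)$ and letting $\delta\to0^+$ (using continuity of $t\mapsto u(t)$ into $L^2(\T)$ and monotone convergence) yields $\int_0^T\|u_x(t)\|_{L^2}^2\,\dd t\le\|u_0\|_{L^2}^2-\|u(T)\|_{L^2}^2\le\|u_0\|_{L^2}^2$, which together with the uniform $L^2$ bound gives $u\in L^2(0,T;H^1(\T))$.

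I do not anticipate a genuine obstacle: the core is a verbatim application of well-established maximal-monotone-operator theory once convexity and lower semicontinuity of $\J$ are in place. The only points demanding care are bookkeeping ones -- confirming $\overline{D(\J)}=L^2(\T)$ so that arbitrary $L^2$ data are admissible, matching the abstract minimal selection with our notation $\Ll^0$, and supplying the short energy estimate \eqref{energyest} for the $L^2(0,T;H^1)$ regularity, which the abstract statement does not provide directly.
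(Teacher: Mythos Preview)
Your proposal is correct and matches the paper's approach: the paper does not give a separate proof but simply cites \cite[Chapter IV, Theorems 2.1 and 2.2]{barbu} after noting that $\J$ is proper, convex, and lower semicontinuous, which is exactly what you do. Your added energy estimate \eqref{energyest} for the $L^2(0,T;H^1(\T))$ regularity is a welcome detail that the paper leaves implicit.
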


The remark after Proposition \ref{subdifferential} states that the regularity properties of $\Ll$ are, in a sense, at least as good as those of the (one-dimensional) Laplace operator. However, the dissipation in $\Ll$ is essentialy stronger than that of $\Delta$, so higher regularity could be expected. The following proposition (in a way, a corollary of Proposition \ref{subdifferential}) captures this additional regularity. Roughly, it states that $u \in D(\Ll)$ if and only if $u \in H^2(\T)$ and $\T$ may be divided into a finite number of (non-degenerate) intervals where $u$ is constant and intervals where $u$ is monotone. 

\begin{prop} 
\label{decomp}
 Let $u \in D(\Ll)$. Then, there exists a disjoint decomposition of $\T$ into a number $n$ of (non-degenerate) open intervals $I^k = ]a^k, b^k[ \subset \T$, $k=1,\ldots, n$ and $n$ (non-degenerate) closed intervals $F^k = [b^{k-1}, a^k]  \subset \T$, $k=1,\ldots,n$ with $b_0 \equiv b_n \mod 1$ such that 
 \begin{itemize} 
  \item[(i)] $u_x = 0$ in each $F^k$, $F^k$ is a maximal closed interval with this property and $u$ attains (improper) local extremum in $F^k$,    
  \item[(ii)] $u$ is monotone in each $I^k$, 
  \item[(iii)] $|F^k| \geq n \frac{\alpha^2}{E(u)^2}$, where $E(u) = \min\limits_{\sigma_x \in \Ll u} \|\sigma_x\|_{L^2(\T)}$. 
 \end{itemize}
 On the other hand, if $u \in H^2(\T)$ and a finite decomposition $\{I^k, F^k\}$ of $\T$ satisfies conditions (i, ii), then $u \in D(\Ll)$ and (iii) holds. 
 
 Furthermore, $\Ll^0 u |_{I^k} = u_{xx}$ in each $I^k$, $\Ll^0 u |_{F^k} = - \frac{\alpha}{|F^k|}$ if $u$ attains an improper maximum in $F^k$ and $\Ll^0 u |_{F^k} = \frac{\alpha}{|F^k|}$ in the other case. 
\end{prop}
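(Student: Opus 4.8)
The plan is to deduce the structure directly from Proposition \ref{subdifferential}: since $u \in D(\Ll) = D(\partial\J)$, there is a selection $\sigma \in H^1(\T)$ with $\sigma \in L(u_x)$ a.e. Start by recalling from the remark that $u_x = \sgn\sigma\,(|\sigma|-1)_+$, so $u_x$ is continuous and the sets $\{u_x = 0\}$ and $\{u_x \neq 0\}$ are respectively closed and open in $\T$; moreover $\{u_x = 0\} = \{|\sigma| \leq 1\}$. The first step is to argue that $\{u_x \neq 0\}$ has only finitely many connected components. Here I would use a Sobolev/energy argument: on each component $I$ of $\{u_x\neq 0\}$ we have $\sigma = u_x + \sgn u_x$, so $|\sigma| > 1$ throughout $I$, while at the endpoints of $I$ (which lie in $\{u_x = 0\}$) we have $|\sigma| = 1$; hence $\sigma$ must oscillate by at least a fixed amount on each such component, and since $\sigma_x \in L^2(\T)$ with $\|\sigma_x\|_{L^2} \leq E(u)$, a Cauchy--Schwarz estimate forces $|I| \geq$ (const)$/E(u)^2$, bounding the number of components. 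On each component $u_x$ keeps a fixed sign (it cannot vanish inside without leaving the component), giving monotonicity; these components are the $I^k$. The complementary closed set $\{u_x = 0\}$ then consists of finitely many closed intervals $F^k$ (degenerate ones being absorbed: actually the point is to show they are non-degenerate, see below), each a maximal interval of constancy, and $u$ attains an improper local extremum there because $u_x$ changes sign or vanishes on both sides.

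The second step is the quantitative lower bound (iii) and the non-degeneracy of the $F^k$. On a facet $F^k = [b^{k-1},a^k]$, the minimal selection has $\sigma_x$ constant (this is what minimizes $\|\sigma_x\|_{L^2(F^k)}$ subject to the boundary values $\sigma(b^{k-1}), \sigma(a^k) \in \{\pm 1\}$ coming from the adjacent monotone pieces), and one computes $\Ll^0 u|_{F^k} = -\sigma_x = \mp \frac{\text{(jump of }\sigma)}{|F^k|}$; since the jump of $\sigma$ across a facet separating two pieces of opposite monotonicity is $\pm 2$ — wait, more carefully: at an improper maximum, $u_x > 0$ on the left giving $\sigma = 1^+ \to$ and $u_x < 0$ on the right giving $\sigma \to -1$, whereas at a one-sided facet only one side contributes — in all cases the relevant quantity works out to $\alpha/|F^k|$ in absolute value after tracking the $\tfrac\alpha2$ normalization in \eqref{Lgraph}. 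Summing $\|\sigma_x\|_{L^2(F^k)}^2 = \frac{(\text{jump})^2}{|F^k|}$ over the $n$ facets and bounding below by... actually the cleanest route to (iii) is: $E(u)^2 \geq \sum_k \|\Ll^0 u\|_{L^2(F^k)}^2 = \sum_k \frac{\alpha^2}{|F^k|^2}|F^k| = \alpha^2\sum_k \frac1{|F^k|} \geq \alpha^2 \cdot \frac{n^2}{\sum_k|F^k|} \geq \alpha^2 n^2$ if $\sum|F^k|\le 1$; rearranged, $\min_k |F^k| \geq$ something — one has to massage this to land exactly on $|F^k| \geq n\alpha^2/E(u)^2$, which follows since $\frac{\alpha^2}{|F^k|} \leq \sum_j \frac{\alpha^2}{|F^j|} \leq E(u)^2$ and then improving by the factor $n$ via the harmonic-mean inequality $\sum 1/|F^j| \geq n^2/\sum|F^j| \geq n^2$. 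In particular each $|F^k| > 0$, so the facets are genuinely non-degenerate — any point where $u_x = 0$ lies in the interior of some $F^k$.

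For the converse, given $u \in H^2(\T)$ with a finite decomposition satisfying (i)--(ii), I would explicitly build a selection $\sigma \in H^1(\T)$: set $\sigma = u_x + \sgn u_x$ on each $I^k$ (this is $H^1$ there since $u \in H^2$, and has boundary values $\pm1$), and on each $F^k$ interpolate $\sigma$ linearly between the two endpoint values $\pm 1$ dictated by the neighbours; the result is globally continuous and piecewise $H^1$, hence in $H^1(\T)$, and satisfies $\sigma \in L(u_x) = L(0) = [-\tfrac\alpha2,\tfrac\alpha2]$ on $F^k$ provided $|\sigma| \leq \tfrac\alpha2$ there — which holds because $\sigma$ is a convex combination of values of modulus... hmm, here one needs the values to be $\pm\tfrac\alpha2$ not $\pm1$; I've been sloppy about the $\tfrac\alpha2$ versus $1$ normalization and in the writeup I will fix conventions once at the start (the graph $L$ in \eqref{Lgraph} has the $\tfrac\alpha2$, and $u_x = \sgn\sigma(|\sigma|-\tfrac\alpha2)_+$ is wrong — rather $u_x = \sigma - \tfrac\alpha2\sgn\sigma$ when $|\sigma|>\tfrac\alpha2$). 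With the selection in hand, Proposition \ref{subdifferential} gives $u \in D(\partial\J) = D(\Ll)$, and the minimal-selection formulas for $\Ll^0 u$ follow by the same computation as above (minimizing $\|\sigma_x\|_{L^2}$ component-by-component: $\sigma_x = u_{xx}$ is forced on $I^k$, and on $F^k$ the free choice of the interpolation is optimized by the constant, i.e. affine, profile). I expect the main obstacle to be the finiteness argument in step one — making rigorous that $\{u_x \neq 0\}$ cannot have infinitely many components accumulating somewhere — which is really where the extra regularity beyond $H^2$ (namely the $H^1$ selection) is used, together with keeping the bookkeeping of signs and the $\tfrac\alpha2$ normalization consistent throughout.
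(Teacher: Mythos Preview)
Your overall architecture matches the paper's: use continuity of $u_x$ (from the remark after Proposition~\ref{subdifferential}) to decompose $\T$ into facets and monotone pieces, prove finiteness via an $L^2$ energy bound on $\sigma_x$, and for the converse build the affine-on-facets selection explicitly. The converse construction and the identification of $\Ll^0 u$ are exactly as in the paper.

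However, your step-one finiteness argument is wrong. On a component $I$ of $\{u_x\neq 0\}$ where, say, $u_x>0$, the selection satisfies $\sigma = u_x + \tfrac{\alpha}{2}$ in the interior, and since $u_x\to 0^+$ from inside at \emph{both} endpoints of $I$, we have $\sigma\to\tfrac{\alpha}{2}$ at both endpoints. There is no forced oscillation of $\sigma$ across $I$; the excursion of $\sigma$ above $\tfrac{\alpha}{2}$ is $\max_I u_x$, which can be arbitrarily small. So Cauchy--Schwarz gives no uniform lower bound on $|I|$, and the argument does not bound the number of components of $\{u_x\neq 0\}$.

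The fix is already contained in your step two, and it is precisely what the paper does: work on the \emph{facets} $F^k$, not the monotone pieces. Because each $F^k$ is an improper local extremum, the adjacent monotone pieces have opposite signs of $u_x$, so by continuity $\sigma$ must run from $+\tfrac{\alpha}{2}$ to $-\tfrac{\alpha}{2}$ (or vice versa) across $F^k$. The affine profile minimises $\int_{F^k}\sigma_x^2$ with these endpoint values, giving
\[
\int_{F^k}\sigma_x^2 \;\ge\; \frac{\alpha^2}{|F^k|}\,,
\]
and summing yields $E(u)^2 \ge \alpha^2\sum_k |F^k|^{-1}$, which delivers both (iii) and the finiteness of the decomposition in one stroke. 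So drop the oscillation argument on $I^k$ and let the facet estimate carry the finiteness as well; your anticipated ``main obstacle'' dissolves once you move it to the correct intervals. (Your normalisation worry is real but cosmetic: the endpoint values are $\pm\tfrac{\alpha}{2}$, the jump is $\alpha$, and $u_x = \sgn\sigma\,(|\sigma|-\tfrac{\alpha}{2})_+$.)
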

\begin{figure}
\begin{center}
\includegraphics[scale=0.5]{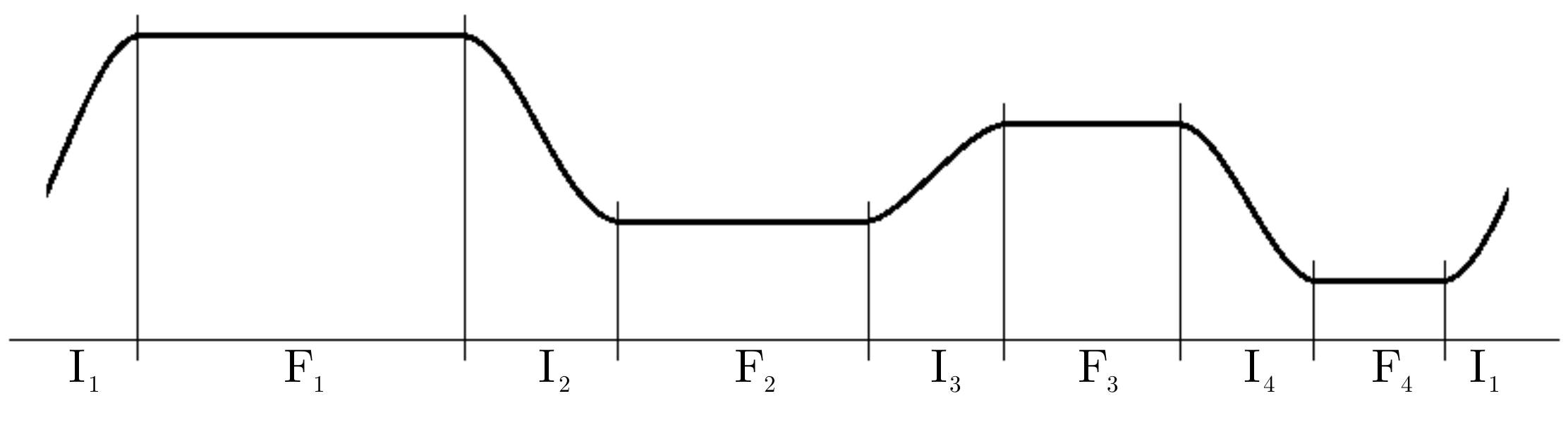}
\caption{Graph of a typical $u \in D(\Ll)$.}
\end{center}
\label{graph}
\end{figure} 
\begin{proof} 
 The existence of a (possibly infinite) decomposition of $\T$ satisfying properties \emph{(i, ii)} is an obvious consequence of continuity of $u_x$. Finiteness follows from property \emph{(iii)}. To prove property \emph{(iii)}, we observe that for any $F^k$ and any $\sigma_x \in \Ll u$ we have 
 \begin{equation} 
  \label{calcdec}
  \int_{F^k} \sigma_x^2 \geq \int_{F^k} \left(\frac{\sigma(a^k) - \sigma(b^{k-1})}{|F^k|}\right)^2 = \frac{\alpha^2}{|F^k|} . 
 \end{equation}
 Indeed, the inequality in \eqref{calcdec} is a consequence of the fact that the affine function minimizes the functional $\int_a^b u_x^2$ on $H^1$ with prescribed boundary values. The equality follows from continuity of $\sigma$ and property \emph{(iii)} of the decomposition, as we necessarily have 
 $$\lim_{x \to (b^{k-1})^-} \sigma(x) = \lim_{p \to 0^\pm} L(p) = \pm \tfrac{\alpha}{2}, \quad \lim_{x \to (a^{k})^+} \sigma(x) = \lim_{p \to 0^\mp} L(p) = \mp \tfrac{\alpha}{2} .$$ 
 
 Now, assume that $u \in H^2(\T)$ and a finite decomposition $\{I^k, F^k\}$ of $\T$ satisfying conditions \emph{(i, ii)} exists. Then we define $\sigma^0(x)$ as $L(u_x(x))$ whenever $u_x(x) \neq 0$. Next, we consider the case that $u_x(x) = 0$. If $x \in \overline{I^k}$ and $u$ is non-decreasing (resp.\;non-increasing) in $I^k$, we put $\sigma^0(x)=\frac{\alpha}{2}$ (resp.\;$\sigma^0(x)=-\frac{\alpha}{2}$). We are left with the task of defining $\sigma^0$ in the interior of intervals $F^k$. As we have already defined $\sigma^0$ in each $a^k$ and $b^k$, we extend it continuously to $F^k$ by suitable affine functions. The function $\sigma^0$ we obtained belongs to $H^1(\T)$. 
 
 Finally, note that due to \eqref{calcdec} and maximality of $F^k$, 
$\sigma^0_x$ necessarily minimizes the $L^2(\T)$ norm among elements of 
$D(\Ll)$. Thus, $\Ll^0 u = \sigma^0_x$. 
\end{proof}

Formally, we may write 
\begin{equation} 
 \label{formally}
 u_{tt} = L(u_x)_{xt} = (L'(u_x) u_{tx})_x .
\end{equation} 
As $L' > 0$ in $\mathcal D'(\mathbb R)$, we could expect \eqref{formally} to yield additional regularity of solutions to \eqref{inclusion}, but due to lack of proper definition of the term $L'(u_x)$ in \eqref{formally} we need to proceed by approximation. Hence, let us denote by $J_\eps$ smoothened versions of $J$ given by 
$$J_\eps(p) = \tfrac{1}{2} p^2 + \alpha (\eps + p^2)^{\frac{1}{2}} $$
and by $L_\eps$ its derivative 
$$L_\eps(p) = J_\eps'(p) = p + \alpha \frac{p}{(\eps + p^2)^{\frac{1}{2}}}.$$ 
In particular we have 
$$1 \leq L_\eps'(p)  = 1 + \frac{\alpha \eps}{(\eps + p^2)^{\frac{3}{2}}} \leq 1 + \frac{\alpha}{\eps^\frac{1}{2}}.$$ 

Analysing the approximate problem 
\begin{equation} 
 \label{approx} 
 u^\eps_t = L_\eps (u^\eps_x)u^\eps_x \quad \text{in }\T 
\end{equation}
we obtain the following result. 
\begin{prop} 
 \label{curvature} 
 Let $u$ be the unique solution to \eqref{inclusion} with $u_0 \in L^2(\T)$. Then, for any $\delta > 0$ we have 
 $$\kappa \equiv u_t \in L^2(\delta,T; H^1(\T)) \cap L^\infty(\delta,T; L^2(\T)) .$$
\end{prop}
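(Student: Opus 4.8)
The plan is to obtain the improved regularity of $\kappa\equiv u_t$ by working with the smooth approximate problem \eqref{approx}, deriving estimates uniform in $\eps$, and passing to the limit using the convergence of $u^\eps$ to $u$ provided by standard nonlinear semigroup/monotone operator theory. Since $L_\eps$ is smooth and satisfies $1\le L_\eps'(p)\le 1+\alpha\eps^{-1/2}$, equation \eqref{approx} is a uniformly (though not uniformly-in-$\eps$) parabolic quasilinear equation, so for each fixed $\eps$ the solution $u^\eps$ with datum $u_0^\eps$ (a smooth approximation of $u_0$) is smooth for $t>0$, and in particular $\kappa^\eps\equiv u^\eps_t$ is a legitimate function on which we may perform energy estimates. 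The point is to extract bounds on $\kappa^\eps$ in $L^\infty(\delta,T;L^2(\T))$ and $L^2(\delta,T;H^1(\T))$ that do not degenerate as $\eps\to 0^+$.

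First I would differentiate \eqref{approx} formally in time: writing $\kappa^\eps=u^\eps_t$ we get $\kappa^\eps_t=(L_\eps'(u^\eps_x)\kappa^\eps_x)_x$ in $\T$. Testing this with $\kappa^\eps$ and integrating over $\T$ gives
\begin{equation}
\tfrac12\tfrac{\dd}{\dd t}\int_\T (\kappa^\eps)^2 = -\int_\T L_\eps'(u^\eps_x)(\kappa^\eps_x)^2 \le -\int_\T (\kappa^\eps_x)^2,
\end{equation}
using $L_\eps'\ge 1$. This simultaneously shows $t\mapsto \|\kappa^\eps(t)\|_{L^2(\T)}^2$ is non-increasing and, after integration in time from $\delta$ to $T$, controls $\int_\delta^T\|\kappa^\eps_x\|_{L^2(\T)}^2$ by $\|\kappa^\eps(\delta)\|_{L^2(\T)}^2$. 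To bound the latter uniformly I would use the energy identity for \eqref{approx} itself: testing \eqref{approx} with $u^\eps_t$ gives $\int_\T(\kappa^\eps)^2 = -\tfrac{\dd}{\dd t}\J_\eps(u^\eps)$ where $\J_\eps(u)=\tfrac12\int_\T u_x^2+2\alpha(\eps+u_x^2)^{1/2}$, and integrating over $(0,\delta)$ yields $\int_0^\delta\int_\T(\kappa^\eps)^2\le \J_\eps(u_0^\eps)\le C$, uniformly in $\eps$ once $u_0^\eps$ is chosen with $\J_\eps(u_0^\eps)$ bounded (possible since $\J$ is finite on $H^1$ and lower semicontinuous, and for general $u_0\in L^2$ one uses instead that $t\|\kappa^\eps(t)\|^2$ is controlled — see below). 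Combined with monotonicity of $\|\kappa^\eps(t)\|_{L^2}$ this gives $\delta\|\kappa^\eps(\delta)\|_{L^2(\T)}^2\le \int_0^\delta\|\kappa^\eps(t)\|_{L^2(\T)}^2\,\dd t\le C$, hence $\|\kappa^\eps(\delta)\|_{L^2(\T)}\le C\delta^{-1/2}$, uniformly in $\eps$; feeding this into the previous inequality controls both norms of $\kappa^\eps$ on $(\delta,T)$ uniformly in $\eps$.

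Then I would pass to the limit. By the semigroup convergence theory (e.g. the Mosco convergence of $\J_\eps$ to $\J$, or direct comparison estimates for \eqref{approx} versus \eqref{inclusion}), $u^\eps\to u$ in $C([0,T];L^2(\T))$, and since $u\in D(\Ll)$ for $t>0$ with $\Ll^0 u=\tfrac{\dd^+}{\dd t}u$ by Proposition \ref{existence}, one identifies the limit of $\kappa^\eps=u^\eps_t$ with $\kappa=u_t$. The uniform bounds above are preserved under weak-$*$ limits, so $\kappa\in L^\infty(\delta,T;L^2(\T))\cap L^2(\delta,T;H^1(\T))$, which is exactly the claim. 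I expect the main obstacle to be the justification of the $\eps$-uniform bound on $\|\kappa^\eps(\delta)\|_{L^2(\T)}$ for merely $L^2$ data together with the rigorous identification of $\lim_\eps u^\eps_t$ with $u_t$: one must either invoke a quantitative stability estimate $\|u^\eps(t)-u(t)\|_{L^2}\le \omega(\eps)$ coming from the structure of \eqref{approx}, \eqref{inclusion} as gradient flows of $\J_\eps$, $\J$ with $\J_\eps\to\J$ in the sense of Mosco, or carefully use the Yosida-type approximation built into the semigroup machinery; the differentiation in time and the energy estimates themselves, while they require the a priori smoothness of $u^\eps$ for $t>0$ (itself a standard parabolic bootstrap for fixed $\eps$), are otherwise routine.
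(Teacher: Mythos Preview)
Your approach is essentially the same as the paper's: approximate by the smooth problem \eqref{approx}, differentiate in time to obtain the linear parabolic equation $\kappa^\eps_t=(L_\eps'(u^\eps_x)\kappa^\eps_x)_x$, test with $\kappa^\eps$ exploiting $L_\eps'\ge 1$, and pass to the limit. The paper's proof is terser---it simply records the resulting estimate $\tfrac12\ess\sup_{[\delta,T]}\|\kappa^\eps\|_{L^2(\T)}^2+\|\kappa^\eps_x\|_{L^2(\delta,T;L^2(\T))}^2\le C(\delta)$ and passes to the limit via $\kappa^\eps\to\kappa$ in $\mathcal D'(\,]\delta,T[\times\T)$, without spelling out the origin of $C(\delta)$---whereas you supply the energy cascade (monotonicity of $\|\kappa^\eps(t)\|_{L^2}$ combined with $\int_0^\delta\|\kappa^\eps\|_{L^2}^2\le\J_\eps(u_0^\eps)$) that produces it.

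Two minor remarks. First, your parenthetical about ``general $u_0\in L^2$'' is where the real work lies, and your ``see below'' does not actually resolve it: for $u_0\notin H^1(\T)$ you cannot choose $u_0^\eps\to u_0$ in $L^2$ with $\J_\eps(u_0^\eps)$ uniformly bounded. The clean fix is the abstract smoothing estimate for gradient flows of convex functionals, $\|u^\eps_t(t)\|_{L^2}\le\|\partial\J_\eps^0(v)\|_{L^2}+t^{-1}\|u_0-v\|_{L^2}$ for any fixed $v\in D(\partial\J_\eps)$; taking $v$ smooth makes the right-hand side $\eps$-uniform since $|L_\eps(p)|\le|p|+\alpha$. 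Second, for the limit identification the paper only uses distributional convergence $\kappa^\eps\to\kappa$, which follows from $u^\eps\to u$ in $L^2$ and is enough to inherit the weak/weak-$*$ bounds; the Mosco-convergence machinery you invoke is correct but more than needed.
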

\begin{proof} 
 Using either the semigroup theory \cite{barbu} or fixed point methods \cite{lsu} we obtain the existence of weak solutions to \eqref{approx} in $C([0,T];L^2(\T)) \cap L^2(0,T; H^1(\T)) \cap H^1(\delta, T; L^2(\T)) \cap L^2(\delta, T; H^2(\T))$ for any $\delta > 0$. The time derivative of approximation $\kappa^\eps = u^\eps_t$ satisfies formally
 \begin{equation} 
  \label{timeder} 
  \kappa^\eps_t = (L_\eps'(u^\eps_x) \kappa^\eps_x)_x \quad \text{in } \T, 
 \end{equation}
 Thus, as $L_\eps'(u^\eps_x)$ is uniformly positive and bounded in $]0,T[ \times \T$ for any given $\eps >0$, we may solve the problem \eqref{timeder} with initial datum cut off. Using e.\,g.\;\cite[Chapter III, Proposition 4.1]{showalter}, we get unique solution $\kappa^\eps$ in the class $C([\delta,T];L^2(\T)) \cap L^2(\delta,T; H^1(\T))$ which clearly coincides with $u^\eps_t$. Testing the problem with the solution $\kappa^\eps$ we obtain following estimate independent of $\eps$
 $$\tfrac{1}{2} \ess\!\!\!\sup_{t\in [\delta,T]} \|\kappa^\eps(t,\cdot\,)\|_{L^2(\T)}^2 + \|\kappa^\eps_x\|_{L^2(\delta,T;L^2(\T))}^2 \leq C(\delta) .$$
 As $\kappa^\eps \to \kappa$ in $\mathcal D'(]\delta,T[\times \T)$, we arrive at the assertion. 
\end{proof}
\begin{cor} 
 Propositions \ref{existence} and \ref{curvature} imply that $u_x$ belongs to $H^1(\delta,T, L^2(I))\times L^2(\delta, T, H^2(I))$, and therefore also to the parabolic H\" older space $C^{\frac{1}{4}, \frac{1}{2}}([\delta,T]\times \T)$ \cite[Chapter II, Lemma 3.3]{lsu}. 
\end{cor}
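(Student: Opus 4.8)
The plan is to read off the correct parabolic function space for $u_x$ from Propositions~\ref{existence} and~\ref{curvature} and then invoke a standard embedding; there is no new idea, only bookkeeping of function spaces. Throughout write $\kappa=u_t$. Propositions~\ref{existence} and~\ref{curvature} give $\kappa\in L^\infty(\delta,T;L^2(\T))\cap L^2(\delta,T;H^1(\T))$ for every $\delta>0$, and by the remark following Proposition~\ref{subdifferential} the distributional second space derivative of $u$ is $u_{xx}=\kappa\,\mathbf 1_{\{u_x\neq0\}}$. Hence $\|u_{xx}(t,\cdot)\|_{L^2(\T)}\le\|\kappa(t,\cdot)\|_{L^2(\T)}$; since $\int_\T u_x(t,\cdot)=0$ by periodicity, the Poincar\'e--Wirtinger inequality gives $\|u_x(t,\cdot)\|_{L^2(\T)}\le C\|u_{xx}(t,\cdot)\|_{L^2(\T)}$, so $u_x\in L^\infty(\delta,T;H^1(\T))$. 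On the other hand, mixed distributional derivatives commute, so $\partial_t u_x=\partial_x u_t=\kappa_x\in L^2((\delta,T)\times\T)$, i.e.\ $u_x\in H^1(\delta,T;L^2(\T))$. Finally, on each monotone interval $I^k(t)$ one has $u_{xx}=\kappa$ by Proposition~\ref{decomp}, hence $u_{xxx}=\kappa_x$ there; this is the $L^2(\delta,T;H^2(I))$-part of the claim, interior to the unfaceted region.

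I would stress, as a warning, that $u_x$ does \emph{not} belong to $L^2(\delta,T;H^2(\T))$: by Proposition~\ref{decomp}, $u_{xx}$ jumps across each facet endpoint from $\pm\alpha/|F^k|$ (the boundary value of the continuous function $\kappa$) to $0$ inside the facet --- this jump is precisely the crystalline curvature --- so $u_{xxx}$ carries Dirac masses there and the global parabolic Sobolev class $W^{2,1}_2((\delta,T)\times\T)$ is unavailable. This is the one point that needs care: the Hölder statement must be derived from the weaker but globally valid bound $u_x\in L^\infty(\delta,T;H^1(\T))\cap H^1(\delta,T;L^2(\T))$, not from a $W^{2,1}_2$ embedding applied naively on all of $\T$. (Alternatively, one can run the genuine $W^{2,1}_2\hookrightarrow C^{1/4,1/2}$ embedding on each space-time cylinder over $I^k(t)$ and then patch, using that $u_x\equiv0$ on the facets and that $u_x$ is continuous across facet endpoints; the former route is cleaner.) I expect this mismatch to be the only real obstacle; the estimates themselves are routine consequences of the two preceding propositions.

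With the global bound in hand, the Hölder continuity is exactly the embedding \cite[Chapter~II, Lemma~3.3]{lsu}, which unpacked is the following. The inclusion $H^1(\T)\hookrightarrow C^{1/2}(\T)$ bounds $[u_x(t,\cdot)]_{C^{1/2}(\T)}$ uniformly for $t\in[\delta,T]$; writing $u_x(t,\cdot)-u_x(s,\cdot)=\int_s^t\kappa_x(\tau,\cdot)\,\dd\tau$ and applying Cauchy--Schwarz in $\tau$ gives $\|u_x(t,\cdot)-u_x(s,\cdot)\|_{L^2(\T)}\le|t-s|^{1/2}\|\kappa_x\|_{L^2((\delta,T)\times\T)}$; and the one-dimensional Gagliardo--Nirenberg inequality $\|g\|_{L^\infty(\T)}\le C\|g\|_{L^2(\T)}^{1/2}\|g\|_{H^1(\T)}^{1/2}+C\|g\|_{L^2(\T)}$, applied to $g=u_x(t,\cdot)-u_x(s,\cdot)$, turns this into $\|u_x(t,\cdot)-u_x(s,\cdot)\|_{L^\infty(\T)}\le C|t-s|^{1/4}$ on bounded time intervals. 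Combining the spatial modulus ($1/2$-Hölder, uniformly in time) with the temporal one ($1/4$-Hölder, uniformly in space) yields $u_x\in C^{1/4,1/2}([\delta,T]\times\T)$, which is the assertion.
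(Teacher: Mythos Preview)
Your argument is correct and essentially coincides with what the paper does, which is simply to read off the parabolic Sobolev class of $u_x$ from Propositions~\ref{existence} and~\ref{curvature} and then cite the embedding \cite[Chapter~II, Lemma~3.3]{lsu}; the paper gives no further detail. Your added observation is in fact sharper than the paper's formulation: as you note, the global claim $u_x\in L^2(\delta,T;H^2(\T))$ fails because $u_{xx}$ jumps by $\pm\alpha/|F^k|$ across each facet endpoint, so the $H^2$ membership holds only on each $I^k$ separately, and the route to $C^{1/4,1/2}$ on all of $\T$ must go through the globally valid space $L^\infty(\delta,T;H^1(\T))\cap H^1(\delta,T;L^2(\T))$ and the Gagliardo--Nirenberg interpolation you wrote out, rather than through a naive $W^{2,1}_2(\T)$ embedding.
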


\begin{remark} 
 If \eqref{formally} was a regular parabolic equation, one would be able to obtain $\kappa$ at least in $C([\delta,T];L^2(\T))$. The reasoning in the proof of Proposition \ref{curvature} does not lead to such regularity, as the required estimate on $L^2(\delta,T;H^{-1}(\T))$ norm of $\kappa^\eps$ does not hold. 
\end{remark}

\begin{prop} 
\label{asymptotic} 
 The solution $u$ becomes constant and equal to $\int_\T u_0$ after time $T^*>0$ such that $T^* \leq \sqrt 2 \left\|u_0 - \int_\T u_0\right\|_{L^2(\T)}$.   
\end{prop}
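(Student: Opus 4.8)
The plan is a Lyapunov/energy-dissipation argument. Put $c_0:=\int_\T u_0$ and $\phi(t):=\tfrac12\|u(t)-c_0\|_{L^2(\T)}^2$; the goal is to show that $\phi$ obeys a differential inequality forcing it to reach $0$ in finite time. First I would reduce to the mean-zero case: integrating \eqref{theequation} over $\T$ and using periodicity shows $t\mapsto\int_\T u(t)$ is constant $\equiv c_0$, while $u\mapsto u+\mathrm{const}$ maps solutions of \eqref{theequation} to solutions; hence it suffices to assume $c_0=0$ and prove $u(t)\equiv0$ for all large $t$, with $\phi(t)=\tfrac12\|u(t)\|_{L^2(\T)}^2$.

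For the dissipation estimate I would invoke Proposition \ref{existence}: for $t>0$ the right time-derivative of $u$ equals $\Ll^0u$ (with $\Ll=-\partial\J$), and $u_t\in L^\infty(\delta,T;L^2(\T))$ for every $\delta>0$, so $\phi$ is locally Lipschitz on $]0,T[$ and $\phi'(t)=(u_t(t),u(t))=-\langle\partial\J^0u(t),u(t)\rangle$ for a.e.\ $t$. Since $\J$ is convex with $\J(0)=0$, the subgradient inequality at $0$ gives $\langle\partial\J^0u,u\rangle\ge\J(u)$, whence
\[
\phi'(t)\ \le\ -\J(u(t))\ =\ -\tfrac12\int_\T u_x^2-\tfrac\alpha2\int_\T|u_x|\ \le\ -\tfrac\alpha2\int_\T|u_x(t)|.
\]
Alternatively $(\Ll^0u,u)=-\int_\T u_x^2-\tfrac\alpha2\int_\T|u_x|$ can be computed directly from the explicit minimal selection described in Propositions \ref{subdifferential}--\ref{decomp}, the set $\{u_x=0\}$ contributing nothing.

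The decisive ingredient, which reflects the very singular nature of the operator, is a Poincaré-type inequality bounding $\|u\|_{L^2}$ \emph{linearly} by the total variation of $u$: since $u(t)\in D(\Ll)$ is continuous with zero mean, $0$ lies between $\min u$ and $\max u$, so $\|u\|_{L^\infty(\T)}\le\mathrm{osc}\,u\le\tfrac12\int_\T|u_x|$, and as $|\T|=1$ this gives $\int_\T|u_x|\ge2\|u\|_{L^\infty(\T)}\ge2\|u\|_{L^2(\T)}=2\sqrt{2\phi}$. Substituting, $\phi'\le-c\sqrt\phi$ for a positive constant $c$ depending on $\alpha$, equivalently $\tfrac{\dd}{\dd t}\|u(t)\|_{L^2(\T)}\le-\tfrac{c}{\sqrt2}$ whenever $u(t)\not\equiv0$; integrating from $0$, $\|u(t)\|_{L^2(\T)}$ hits $0$ at a finite time $T^*$ bounded by a multiple of $\|u_0-\int_\T u_0\|_{L^2(\T)}$, which is the bound asserted. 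Once $u(T^*)=\int_\T u_0$ the solution stays equal to that constant, since constants are stationary solutions of \eqref{theequation} and solutions are unique (Proposition \ref{existence}).

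I expect the main work to be not any single estimate but making the first two steps rigorous: justifying that $\phi$ is absolutely continuous and that the chain rule $\phi'(t)=(u_t(t),u(t))$ holds a.e.\ (using the right-differentiability and $u_t\in L^\infty_{\mathrm{loc}}((0,T];L^2)$ from Proposition \ref{existence}), and evaluating $(\Ll^0u,u)$ correctly on $\{u_x=0\}$ via the structure of $\Ll^0$ in Propositions \ref{subdifferential}--\ref{decomp}; as a safe alternative the whole computation can be run on the regularised problems \eqref{approx} and passed to the limit $\eps\to0^+$. It is worth stressing the contrast with the linear heat equation, where the analogous computation only yields $\phi'\le-c\phi$ and hence exponential decay: here it is exactly the $\int_\T|u_x|$ term of $\J$ — the total-variation part of the operator — that upgrades this to $\phi'\lesssim-\sqrt\phi$, which is what produces extinction in finite time.
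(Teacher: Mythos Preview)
Your proposal is correct and follows essentially the same route as the paper: reduce to mean zero, test the inclusion with $u$ to obtain $\tfrac12\tfrac{\dd}{\dd t}\|u\|_{L^2}^2 + \int_\T u_x^2 + \tfrac{\alpha}{2}\int_\T|u_x| \le 0$, invoke a Poincar\'e--type inequality bounding $\|u\|_{L^2}$ linearly by $\int_\T|u_x|$, and integrate the resulting inequality $\tfrac{\dd}{\dd t}\|u\|_{L^2}\le -\text{const}$ to extinction. The only cosmetic difference is the Poincar\'e step: the paper uses $\int_\T u^2 \le 2\bigl(\int_\T|u_x|\bigr)^2$ directly (which produces the constant $\sqrt 2$ in the statement), while your $L^\infty$--oscillation argument $\|u\|_{L^2}\le\|u\|_{L^\infty}\le\tfrac12\int_\T|u_x|$ is a sharper variant of the same idea and would in fact yield a better constant if tracked.
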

\begin{proof} 
 Assume first that $\int_\T u_0$ and consequently $\int_\T u = 0$ in a.\,e.\;time instance. Testing the problem \eqref{inclusion} with $u$ we obtain 
 \begin{equation} 
 \tfrac{1}{2}\left(\int_\T u^2\right)_t + \int_\T (u_x^2 + |u_x|) = 0 
 \end{equation} 
 in almost all instances of time. As 
 \begin{equation} 
  \int_\T u^2 \leq 2 \left( \int_\T |u_x|\right)^2, 
 \end{equation}
 this yields 
 \begin{equation} 
 \label{asympest1}
 \tfrac{1}{2}\left(\|u\|_{L^2(\T)}^2\right)_t + \tfrac{1}{\sqrt 2} \|u\|_{L^2(\T)} \leq 0.  
 \end{equation} 
 As long as $u \neq 0$ we may divide \eqref{asympest1} by $\|u\|_{L^2(\T)}$ obtaining 
 \begin{equation} 
  \left(\|u\|_{L^2(\T)}\right)_t \leq - \tfrac{1}{\sqrt 2} . 
 \end{equation}
 Integrating over time we see that $u=0$ in $t = \sqrt 2 \|u_0\|_{L^2(\T)}$ (and afterwards). 
 
 Finally, let us relax the assumption of vanishing mean of $u$. It suffices to notice that $u - \int_\T u_0$ is the solution to \eqref{inclusion} with initial datum $u_0 - \int_\T u_0$. 
\end{proof} 

Propositions \ref{subdifferential}-\ref{curvature} comprise the proof of Theorem \ref{basicthm}. \qed

 \section{Characterisation of regular evolutions} 
 In \cite{asai} it is proved that, starting from regular datum (which in particular admits only finite number of facets), the number of facets of a solution to \eqref{theequation} is a non-increasing function of time. On the other hand, we start from datum in $L^2(\T)$ which is however instantly regularised. Thus, the number of facets is also a non-increasing, though possibly unbounded on $]0,T]$, function of time and there is a countable number of moments of merging. Let now $r,s$ denote any subsequent two of those. In $]r,s[$ the number of facets is constant and we may postulate that there are well-defined functions $I^k = I^k(t) = ]a^k(t), b^k(t)[$, $F^k = F^k(t) = [b^{k-1}(t), a^k(t)]$, $k=1,\ldots,n$. Taking into account Proposition \ref{decomp} we note that the existence of sufficiently regular solutions to \eqref{theequation} in $]r,s[$ is equivalent to the existence of solutions to the following system of free boundary problems 
\begin{equation} 
 \label{free}
 u_t = u_{xx} \qquad \text{in } I^k(t),  
\end{equation}
\begin{equation}
 \label{freebc1}
 u_x = 0 \qquad \text{in } \partial F^k(t),  
\end{equation}
\begin{equation} 
 \label{freebc2} 
 u_t = (-1)^k \tfrac{\alpha}{|F^k|} \qquad \text{in } \partial F^k(t) 
\end{equation} 
in $]r,s[$ for $k=1,\ldots,n$. 
In order to solve (\ref{free}-\ref{freebc2}), we consider differentiated system for $\kappa = u_t$ and $I^k=I^k(t)$
\begin{equation}
 \label{freek}
 \kappa_t = \kappa_{xx} \qquad \text{in } I^k(t),
\end{equation}
\begin{equation} 
  \label{freekbc1} 
  \kappa = (-1)^k \tfrac{\alpha}{|F^k|} \qquad \text{in } \partial F^k(t), 
\end{equation}
\begin{equation} 
 \label{freekbc2} 
 - \dot z = (-1)^k \tfrac{|F^k|}{\alpha} \kappa_x  \qquad \text{for } z \in \partial F^k,
\end{equation} 
$k=1, \ldots, n$. Equation \eqref{freekbc1} is rewritten \eqref{freebc2}, while \eqref{freekbc2} follows by differentiation of \eqref{freebc1} with respect to time, yielding 
\begin{equation*}
 u_{xt}(\,\cdot\,, z) + u_{xx}(\,\cdot\,, z) \dot z = 0,
\end{equation*}  
and application of (\ref{freebc2},\ref{free}). We will solve (\ref{freek}-\ref{freekbc2}) locally in a time interval denoted for simplicity by $]0, t_*[$ given a regular initial datum. 

\begin{prop} 
\label{freeexistence}
 Let $(I^k_0=]a^k_0, b^k_0[ \colon k=1, \ldots,n)$ be a collection of disjoint intervals in $\T$ ordered by succession. Let $\kappa_0 \in H^1\left(\bigcup_{k=1,\ldots,n} I^k_0\right)$ satisfy compatibility condition
 $$\kappa_0 = (-1)^k \tfrac{\alpha}{|F^k_0|} \qquad \text{in } \partial F^k_0,$$ 
 where we denoted $F_{k-1}^0 = [b^{k-1}_0, a^k_0]$, $k=1,\ldots,n$, $b^0_0 = b^n_0$. 
 Assuming that $t_*$ is small enough, there exists a unique solution $(\kappa, (I^k))$ to (\ref{free}-\ref{freebc2}) in $]0, t_*[$ satisfying $\kappa(0, \cdot) = \kappa_0$ and
 \[ 
  \|\kappa_{xx}(t,\cdot\,) \|_{L^2(I^k)} \in L^2(0,t_*) , \quad \|\kappa_x(t,\cdot\,) \|_{L^2(I^k)} \in L^\infty(0,t_*), 
 \]  
 \[ 
  I^k \in H^1(0,t_*)^2 
 \]
 for all $k=1, \ldots,n$. 
\end{prop}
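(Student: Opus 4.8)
The plan is to solve the coupled system (\ref{freek}-\ref{freekbc2}) by a fixed-point argument: freeze the geometry, solve the linear parabolic problem for $\kappa$, read off the boundary flux, update the interval endpoints, and iterate. More precisely, given the reference intervals $I^k_0$, I would first pick a small $t_*$ and consider the set $\mathcal{A}$ of admissible endpoint motions, namely pairs of curves $(a^k, b^k) \in H^1(0,t_*)^2$ with $a^k(0) = a^k_0$, $b^k(0) = b^k_0$, staying within a fixed small tube around the initial configuration (so that all $|F^k(t)|$ and $|I^k(t)|$ remain bounded away from $0$ and ordering is preserved) and with $\|\dot a^k\|_{L^2}, \|\dot b^k\|_{L^2}$ controlled by a constant $M$ to be fixed. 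For each such choice I would flatten each moving interval $I^k(t)$ to the fixed interval $I^k_0$ by an affine-in-$x$, time-dependent change of variables; this turns \eqref{freek} into a uniformly parabolic equation on a fixed domain with smooth (in fact $H^1$-in-time) coefficients and lower-order terms whose size is controlled by $M$, together with the Dirichlet data \eqref{freekbc1} which are themselves determined by $|F^k(t)|$ and hence lie in $H^1(0,t_*)$ and satisfy the compatibility condition at $t=0$ by hypothesis on $\kappa_0$.

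Next I would invoke standard linear parabolic theory (e.g. \cite{lsu} or the semigroup/variational results already cited as \cite{showalter}) to get, for each $k$, a unique solution $\kappa$ on $I^k_0 \times ]0,t_*[$ in the class $L^2(0,t_*;H^2) \cap H^1(0,t_*;L^2) \cap L^\infty(0,t_*;H^1)$, with norms bounded in terms of $\|\kappa_0\|_{H^1}$, the geometric data, and $M$, \emph{uniformly in $t_*$ small}. Because $\kappa \in L^2(0,t_*;H^2(I^k_0))$ the trace $\kappa_x$ on $\partial I^k_0$ is defined and lies in $L^2(0,t_*)$ (using the trace estimate $\|\kappa_x(\cdot,z)\|_{L^2(0,t_*)} \lesssim \|\kappa\|_{L^2(0,t_*;H^2)}^{1/2}\|\kappa\|_{L^2(0,t_*;H^1)}^{1/2} + \dots$, or the parabolic trace theorem). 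I then define the updated endpoint motion by integrating \eqref{freekbc2}, $z(t) = z(0) - (-1)^k \int_0^t \tfrac{|F^k(s)|}{\alpha}\kappa_x(s,z)\,\dd s$; this lands back in $H^1(0,t_*)^2$ with $\|\dot z\|_{L^2} \lesssim \|\kappa_x\|_{L^2}$, so for $t_*$ small the $\int$ keeps the curves inside the tube and — here is where smallness of $t_*$ is used again — the $H^1$-in-time bound closes if we are careful, or at least the map is a contraction on a slightly weaker norm. This defines a map $\Phi : \mathcal{A} \to \mathcal{A}$; a fixed point of $\Phi$ is exactly a solution of (\ref{freek}-\ref{freekbc2}), and integrating $\kappa$ in time recovers $u$ solving (\ref{free}-\ref{freebc2}) with the stated regularity. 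Uniqueness follows from the contraction property (or from a direct energy estimate on the difference of two solutions).

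The main obstacle I anticipate is showing that $\Phi$ is a genuine contraction — or at least that it has a fixed point via Schauder — in a norm strong enough to recover the claimed $H^1(0,t_*)$ regularity of the endpoints and the $L^\infty(0,t_*;H^1) \cap L^2(0,t_*;H^2)$ regularity of $\kappa$. The difficulty is the feedback loop: the coefficients of the flattened parabolic equation and its Dirichlet data depend on the endpoint curves (through $|F^k|$ and the affine change of variables), so the dependence of $\kappa_x|_{\partial I^k_0}$ on the input curves must be estimated in a way that gains a small factor as $t_* \to 0$. The trace of the $x$-derivative is the borderline quantity — it is exactly as regular as the data allow and no better — so one must be careful to measure the input in a norm (e.g. $L^2$-in-time of $\dot z$, with $L^\infty$ control on $z$) for which the solution map of the linear problem is Lipschitz with constant $o(1)$ as $t_* \to 0$, while simultaneously the compatibility condition at $t=0$ prevents an initial-layer loss in the parabolic estimate. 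Once the contraction is set up on the weaker norm, the stronger regularity is recovered a posteriori from the linear estimates applied to the fixed point. A secondary technical point is verifying that the compatibility of $\kappa_0$ with the Dirichlet data at the corners $\partial F^k_0$ is precisely the zeroth-order parabolic compatibility condition needed for the $L^2(H^2) \cap H^1(L^2)$ estimate — this is why the hypothesis on $\kappa_0$ is stated the way it is.
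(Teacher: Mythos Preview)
Your approach is essentially the same as the paper's: flatten each $I^k(t)$ to a fixed interval by an affine change of variables (the paper also subtracts the affine Dirichlet lift so the transformed problem has homogeneous boundary data), set up a Banach fixed-point argument coupling the linear parabolic problem for the rescaled $\kappa$ with the ODE for the endpoints, and close the contraction by smallness of $t_*$. The specific device the paper uses to extract the $o(1)$ factor you flag as the main obstacle is the parabolic trace embedding $C([0,t_*];L^2(I))\cap L^2(0,t_*;H^1(I))\hookrightarrow L^4(0,t_*;L^2(\partial I))$ applied to $\kappa_x$, which gives $\|\kappa_x|_{\partial I}\|_{L^2(0,t_*)}\le t_*^{1/4}\|\kappa_x|_{\partial I}\|_{L^4(0,t_*)}\lesssim t_*^{1/4}|\kappa|_X$ and hence the needed smallness in the endpoint update.
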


\begin{proof} 
We rescale each $\kappa|_{I^k}$ to a fixed interval $I=[0,1]$ with homogeneous Dirichlet boundary conditions, namely we introduce $\kkk$ defined by
\begin{equation}
 \label{ktilde} 
 \kkk(t,x) = \kappa(t, \Phi_{a,b}^k (t, x)) - f_{a,b}^k(t,x),
\end{equation}
for all $(t,x) \in [0,t_*]\times I$, $k=1,\ldots,n$. Here, $\Phi_{a,b}^k(t,\,\cdot\,) \colon [0,1] \to I^k(t)$ denotes the affine bijection
\begin{equation} 
 \Phi_{a,b}^k(t,x) = a^k(t) + |I^k(t)| x 
\end{equation}
and $f_{a,b}^k(t,\,\cdot\,)$ is the affine function given by
\begin{equation}  
 f_{a,b}^k(t,x) = (1-x)(-1)^{k-1} \tfrac{\alpha}{|F_{k-1}(t)|} + x (-1)^k \tfrac{\alpha}{|F_{k}(t)|} .
\end{equation}
Note that if $(a,b)$ is continuous, the rescaling $(\kkk) \leftrightarrow \kappa$ is a bi-Lipschitz mapping between $L^p(0,t_*, H^l(I))^n$ and its non-cylindrical equivalent in the image for any $p \in [1, \infty], l =0, 1, \ldots$. Functions $\kkk$ are expected to satisfy equations 
\begin{equation}
 \label{kkkeqn}
 \kkk_t = \tfrac{1}{|I^k|^2} \kkk_{xx} +  \tfrac{1}{|I^k|} \Phi_{a,b,t}^k \cdot (\kkk_x + f_{a,b,x}^k) - f_{a,b,t}^k \quad \text{in } I,
\end{equation} 
\begin{equation} 
 \label{kkkbc1}
 \kkk = 0 \quad \text{on } \partial I, 
\end{equation}
\begin{equation} 
\begin{aligned}
 \label{kkkbc2}
 - \dot a^k = &\tfrac{1}{\alpha} (-1)^{k-1} \tfrac{|F_{k-1}|}{|I^k|}(\kkk_x(\,\cdot\,,0) + f_{a,b,x}^k(\,\cdot\,,0)), \\
 - \dot b^k = &\tfrac{1}{\alpha} (-1)^{k} \tfrac{|F_{k}|}{|I^k|}(\kkk_x(\,\cdot\,,1) + f_{a,b,x}^k(\,\cdot\,,1))
\end{aligned} 
\end{equation} 
in $]0,t_*[$ for each $k=1,\ldots,n$. The initial condition is obtained from the original problem by
\begin{equation}
 \label{kkkic} 
 \kkk(0,\,\cdot\,) = \kappa(0, \Phi_{a,b}^k (0,\,\cdot\,)) - f_{a,b}^k(0,\,\cdot\,) \quad \text{in } I. 
\end{equation}
We prove the existence of solutions to (\ref{kkkeqn}-\ref{kkkic}) by means of Banach fixed point theorem. Let us denote 
\begin{equation}
 \label{defX}
 X = \left\{
 \begin{aligned}
 \nn = &(\nn^1, \ldots, \nn^n) \in \left(C([0,t_*]; H^1_0(I)) \cap L^2(0,t_*;H^2(I))\right)^n \\ &\text{ such that }\nnk \text{ satisfies \eqref{kkkic} and }\\ &|\nn|_X^2 \leq 4 \max_{k = 1,\ldots, n} \|\kappa_{0,x}\|_{L^2(I^k)}^2 + \tfrac{19 \alpha^2}{m^4} 
\end{aligned}
\right\}, 
\end{equation} 
\begin{equation}
\label{defY}
 Y = \left\{
 \begin{aligned}
 (c,d) = &(c^1, \ldots, c^n, d^1, \ldots, d^n) \in H^1(0,t_*, \T)^{2n} \text{ such that }\\
 &\begin{array}{ll}
 c^k(0) = c^k_0,\; d^k(0) = d^k_0 &\text{ for } k=1,\ldots,n, \\ 
 \min |J^k| = \min|]c^k,d^k[| \geq m &\text{ for } k=1,\ldots,n , \\ 
 \min |G^k| = \min|]d^{k-1},c^k[| \geq m &\text{ for } k=1, \ldots, n,\\ 
 |(c, d)|_Y^2 \leq \tfrac{1}{3} . 
 \end{array} 
\end{aligned} 
\right\}.
\end{equation} 
Here, the number $m$ is chosen so that $|J^k_0| \geq 2m$ and $|G^k_0| \geq 2m$ for $k=1, \ldots, n$. We also introduced the notation 
\begin{equation} 
\label{defXn}
 |\nn|_X =\max_{k = 1,\ldots, n} \left(\int_{0}^{t_*}\!\!  \int_I (\nnk_{xx})^2 + \sup_{t \in [0,t_*]} \int_I (\nnk_x)^2 \right)^\frac{1}{2},
\end{equation}
\begin{equation}
\label{defYn}
 |(c, d)|_Y = \max_{k = 1,\ldots, n} \left(\|\dot{c}^k\|_{L^2(0,t_*)}^2 + \|\dot{d}^k\|_{L^2(0,t_*)}^2\right)^\frac{1}{2}  
\end{equation} 
for seminorms that induce metrics on $X$ and $Y$ and denoted $d^0 = d^n$. Further, we introduce operators $\R \colon Y \to X$ solving the system (\ref{kkkeqn}, \ref{kkkbc1}, \ref{kkkic}) for $\nn$ given $(c,d)$ and $S\colon X \to Y$ that solves the ODE system \eqref{kkkbc2} for $(c,d)$ given $\nn$. We will now show that these operators are well defined and that the composed operator
\begin{equation} 
 \label{RS}
 (\R \circ \S, \S \circ \R)\colon X \times Y \to X \times Y
\end{equation}
satisfies the assumptions of Banach fixed point theorem provided that $t_*$ is small enough. 

First we consider well-posedness of the operator $\R$. As $(c,d) \in H^1(0,t_*)^{2n}$, the problem of solving (\ref{kkkeqn}, \ref{kkkbc1}, \ref{kkkic}) is indeed well-posed in 
\[
\left(C([0,t_*], H^1_0(I)) \cap L^2(0,t_*,H^2(I))\right)^n 
\] 
and we have the following estimate on the solution $\nn$ 
\begin{multline} 
 \int_{0}^{t_*}\!\!  \int_I \left(\tfrac{\nnk_{xx}}{|J^k|}\right)^2 + 2\, \sup_{t \in [0,t_*]} \int_I (\nnk_x)^2 \leq 2 \int_I \nnk_x(0,\cdot\,)^2\\ 
 + 3 \left( \int_{0}^{t_*}\!\! \int_I ( \Phi^k_{c,d,t})^2 \left((\nnk_x)^2 + (f_{c,d,x}^k)^2\right) + \int_{0}^{t_*}\!\! \int_I |J^k|^2(f_{c,d,t}^k)^2 \right). 
\end{multline} 
Using inequalities 
\begin{equation*}
 \int_{0}^{t_*}\!\! \int_I ( \Phi^k_{c,d,t})^2 \left((\nnk_x)^2 + (f_{c,d,x}^k)^2\right) \leq |(c,d)|_Y^2 \left( \sup_{t \in [0,t_*]} \int_I (\nnk_x)^2 + \left( \tfrac{2 \alpha}{m}\right)^2\right) , 
\end{equation*} 
\begin{equation*} 
 \int_{0}^{t_*}\!\! \int_I |J^k|^2(f_{c,d,t}^k)^2 \leq \int_{0}^{t_*}\!\! \int_I (f_{c,d,t}^k)^2 \leq \tfrac{11 \alpha^2}{m^4} |(c,d)|_Y^2 ,
\end{equation*}
\begin{equation*} 
 \int_I \nnk_x(0,\cdot\,)^2 \leq 2 \|\kappa_{0,x}\|_{L^2(J^k)}^2  + 2 \tfrac{\alpha^2}{m^2}
\end{equation*}
and the definition of $Y$ we obtain $\nnk \in X$. 

Now, let $\nn \in X$. Due to parabolic trace embedding 
\begin{equation}
 \label{trace}
 C([0,t_*], L^2(I)) \cap L^2(0,t_*,H^1(I)) \hookrightarrow L^4(0,t_*, L^2(\partial I))
\end{equation} 
the problem of solving \eqref{kkkbc2} is locally well-posed and we have inequalities 
\begin{multline} 
\label{estabh1}
 |(c,d)|^2_Y \leq \tfrac{2}{\alpha^2 m^2} \max_{k = 1,\ldots,n}\|\nnk_x\|_{L^2(0,t_*,L^2(\partial I))}^2 + \tfrac{8}{m^2} t_* \\
 \leq  \tfrac{2}{\alpha^2 m^2} \max_{k = 1,\ldots,n} \|\nnk_x\|_{L^4(0,t_*,L^2(\partial I))}^2 t_*^{\frac{1}{2}} + \tfrac{8}{m^4} t_* \leq \tfrac{2 \gamma^2}{\alpha^2 m^2} |\nn|_X^2 t_*^{\frac{1}{2}} + \tfrac{8}{m^4} t_*, 
\end{multline}
\begin{multline} 
\label{estabc}
 \sup_{t \in [0, t_*]} |c^k(t) - c^k(0)| \leq \int_{0}^{t_*} |\dot{c}_k| \leq \tfrac{1}{\alpha m} \int_{0}^{t_*} |\nnk_x(\,\cdot\,,0) | + \tfrac{2}{m^2} {t_*} \\ \leq \tfrac{1}{\alpha m} \|\nnk_x(\,\cdot\,,0)\|_{L^4(0,t_*)} t_*^\frac{3}{4} + \tfrac{2}{m^2} t_* \leq \tfrac{\gamma}{\alpha m} |\nn|_X t_*^\frac{3}{4} + \tfrac{2}{m^2} t_*
\end{multline} 
for each $k = 1, \ldots, n$ and similarly with $d^k$, where $\gamma$ is the constant in the inequality  
\[ 
 \|u\|_{L^4(0,t_*,L^2(\partial I))} \leq \gamma \left(\int_{0}^{t_*}\!\!  \int_I (u_x)^2 + \sup_{t \in [0,t_*]} \int_I u^2 \right)^\frac{1}{2}
\]
connected to the embedding \eqref{trace}. Thus, $\S$ is well defined provided that $t_*$ is small enough. We also see that under this assumption $(\R \circ \S, \S \circ \R)$ maps $X\times Y$ into itself. We need yet to prove that this map is a contraction. 

First, let $\R (c, d) = \nn$ and $\R (c', d') = \nn'$. Let us denote $J'^k = ]c'^k, d'^k[$, $G'^k = ]d'^{k-1}, c'^k[$ for $k = 1, \ldots, n$, $d'^0 = d'^n$. Then, we have an inequality 
\begin{multline}
 \label{eww}
 \int_{0}^{t_*}\!\!  \int_I \left(\tfrac{\nnpk_{xx} - \nnk_{xx}}{|J^k|}\right)^2 + 2\,\sup_{t \in [0,t_*]} \int_I (\nnpk_x - \nnk_x)^2  \leq 5 \left(\int_{0}^{t_*}\!\!  \int_I \left(\tfrac{1}{|J^k|^2} - \tfrac{1}{|J'^k|^2}\right)^2 (\nnk_{xx})^2 \right.\\ 
 + \int_{0}^{t_*}\!\! \int_I \left(\Phi^k_{c,d,t} - \tfrac{|J^k|}{|J'^k|} \Phi^k_{c',d',t}\right)^2\left(\nnk_x + f^k_{c',d',x} \right)^2 + \int_{0}^T\!\! \int_I (\Phi^k_{c,d,t})^2 \left(\nnpk_x - \nnk_x \right)^2 \\ 
 + \left. \int_{0}^{t_*}\!\! \int_I (\Phi^k_{c,d,t})^2 \left(f^k_{c,d,x} - f^k_{c',d',x} \right)^2 + \int_{0}^{t_*}\!\! \int_I (f^k_{c,d,t} - f^k_{c',d',t})^2\right) . 
\end{multline} 
After a technical calculation involving application of embedding $H^1(0,t_*) \hookrightarrow C([0,t_*])$ to \eqref{eww} we 
obtain that $\R$ is Lipschitz continuous on $Y$. Now, if $\S \nn = (c,d)$ and 
$\S \nn' = (c',d')$, we can derive (again, owing to continuity of $H^1(0,t_*) 
\hookrightarrow C([0,t_*])$)
\begin{equation} 
 \label{eww2} 
 |(\dot{c} - \dot{c}', \dot{d} - \dot{d}')| \leq C(m) \left( \left|\left(c-c',d-d'\right)\right| + \|\nnk_x - \nnpk_x\|_{L^2(\partial I)^n}^2\right). 
\end{equation} 
from \eqref{freekbc2} ($|\cdot|$ denotes any norm on $\mathbb R^{2n}$). Invoking Gronwall's inequality and 
\begin{equation} 
 \label{h}
 \|h\|_{L^2(0,t_*;L^2(\partial I))}^2 \leq t_*^{\frac{1}{2}} \|h\|_{L^4(0,t_*,L^2(\partial I))}^2 \leq t_*^{\frac{1}{2}} |h|_X  
\end{equation}
we obtain that $\S$ is Lipschitz continuous with Lipschitz constant arbitrarily small for small $t_*$. Thus, choosing small enough $t_*$, we obtain existence of unique fixed point of \eqref{RS} which clearly solves (\ref{kkkeqn}-\ref{kkkic}). Inverting the rescaling \eqref{ktilde} yields a solution to the system of free boundary problems (\ref{freek}-\ref{freekbc2}) with initial datum $\kappa_0$ satisfying the assertions. 
\end{proof} 

We may extend $\kappa$ to each $F^k$ by suitable constants ($(-1)^k \frac{\alpha}{|F^k|}$). Resulting function belongs in fact to $C([0,t_*], H^1(\T))$.  
Finally, we construct the function $u$ solving (\ref{free}-\ref{freebc2}) with initial datum $u_0$ as
\[ 
 u(t,\cdot) = u_0 + \int_0^t \kappa .
\]
 
Using standard methods of linear parabolic regularity theory, we may extract from (\ref{kkkeqn}-\ref{kkkbc2}) further regularity of endpoint paths and the unfaceted part of solution. 

\begin{prop} 
 The solution $(u, (a,b))$ to (\ref{free}-\ref{freebc2}) constructed in Proposition \ref{freeexistence} belongs to 
 $$C^\infty\left(\bigcup_{k = 1, \ldots, n}\overline{I^k_{\delta,t_*}}\right) \times C^\infty([\delta, t_*])^{2n}$$ 
 for any $0 < \delta < t_*$. 
\end{prop}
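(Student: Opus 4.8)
The plan is to bootstrap regularity from the fixed-point solution $(\kappa,(a,b))$ using the coupled system (\ref{kkkeqn}--\ref{kkkbc2}) in the rescaled variables, exploiting the parabolic smoothing of the heat equation for $\kkk$ together with the fact that once $(a,b)$ gains regularity the coefficients of the equation for $\kkk$ improve, which in turn feeds back into better regularity of $(a,b)$ via (\ref{kkkbc2}). Concretely, I would first fix $0<\delta<t_*$ and establish that, away from $t=0$, the solution $\kkk$ to (\ref{kkkeqn}) with the smooth (affine) right-hand side is a weak solution of a uniformly parabolic linear equation on the fixed domain $I$ with homogeneous Dirichlet data \eqref{kkkbc1}. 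Since $\kkk \in L^2(0,t_*;H^2(I))\cap C([0,t_*];H^1_0(I))$ and $(a,b)\in H^1(0,t_*)^{2n}\hookrightarrow C^{1/2}([0,t_*])$, the coefficients $\tfrac{1}{|I^k|^2}$ and $\tfrac{1}{|I^k|}\Phi^k_{a,b,t}$ and the source $f^k_{a,b,t}$ lie a priori in, say, $L^2$ in time with values in smooth functions of $x$; by the trace embedding \eqref{trace} the boundary normal derivative $\kkk_x(\cdot,0),\kkk_x(\cdot,1)$ lies in $L^4(0,t_*)$, hence $\dot a^k,\dot b^k\in L^4(\delta,t_*)$, so $(a,b)\in W^{1,4}(\delta,t_*)\hookrightarrow C^{3/4}([\delta,t_*])$. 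This is the base case of the induction.

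The inductive step is a standard parabolic bootstrap run in the time variable. Suppose $(a,b)\in C^{m,\beta}([\delta',t_*])^{2n}$ for some $\delta'<\delta$ and some $m\ge 0$, $\beta\in(0,1)$. Then the coefficients of (\ref{kkkeqn}) — which are polynomial (indeed affine or rational with nonvanishing denominators bounded below by $m>0$ from the definition of $Y$) in $a^k,b^k$ and their first derivatives — are of class $C^{m-1,\beta}$ in $t$ and $C^\infty$ (in fact affine) in $x$; likewise the Dirichlet data are smooth. Interior-in-$x$ but up-to-the-spatial-boundary linear parabolic Schauder estimates (e.g.\ \cite{lsu}) applied on $[\delta,t_*]\times I$, together with the compatibility at $t=\delta'$ being irrelevant since we only claim regularity on $[\delta,t_*]$, give $\kkk\in C^{(m+1)+\alpha',2(m+1)+2\alpha'}_{\mathrm{loc}}$ in $t$ and $C^\infty$ in $x$ up to $\partial I$ on $[\delta,t_*]\times I$ — more precisely $\partial_t^j\partial_x^\ell\kkk$ is Hölder on $[\delta,t_*]\times I$ for all $j,\ell$ with $2j+\ell \le 2(m+1)$. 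Restricting to the boundary $x\in\{0,1\}$ and using \eqref{kkkbc2} we then get $\dot a^k,\dot b^k\in C^{m,\beta'}([\delta,t_*])$, i.e.\ $(a,b)\in C^{m+1,\beta'}([\delta,t_*])^{2n}$, closing the loop. Iterating over a decreasing sequence of times $\delta>\delta_1>\delta_2>\cdots$ (or, since $\delta$ is arbitrary, simply relabelling) yields $(a,b)\in C^\infty([\delta,t_*])^{2n}$ and $\kkk\in C^\infty([\delta,t_*]\times I)$ for every $\delta>0$.

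Finally I would transfer this back through the rescaling \eqref{ktilde}: $\kappa(t,y)=\kkk^k\!\big(t,(\Phi^k_{a,b}(t,\cdot))^{-1}(y)\big)+f^k_{a,b}(t,(\Phi^k_{a,b})^{-1}(y))$ on $\overline{I^k(t)}$, and since $\Phi^k_{a,b}$ and its inverse are now $C^\infty$ jointly in $(t,x)$ (affine in $x$ with $C^\infty$-in-$t$ coefficients $a^k(t),|I^k(t)|$, the latter bounded below), the composition is $C^\infty$ on $\overline{I^k_{\delta,t_*}}$ as defined in \eqref{ikrs}. Then $u(t,\cdot)=u_0+\int_0^t\kappa$ inherits the same smoothness on $\overline{I^k_{\delta,t_*}}$: $u_t=\kappa$ is $C^\infty$ there, and $u_{xx}=\kappa$ as well (from \eqref{free}), whence all mixed derivatives $\partial_t^j\partial_x^\ell u$ are obtained from $\partial_t^{j}\partial_x^{\ell-2}\kappa$ for $\ell\ge 2$ and from $\partial_t^{j-1}\partial_x^{\ell}\kappa$ for $j\ge 1$, all of which are continuous up to $\overline{I^k_{\delta,t_*}}$; a short induction on $j+\ell$ using these two relations covers every derivative. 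This proves $u\in C^\infty(\bigcup_k \overline{I^k_{\delta,t_*}})$ and $(a,b)\in C^\infty([\delta,t_*])^{2n}$.

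The main obstacle is purely technical: making the first bootstrap step clean, i.e.\ checking that the base-case coefficients and boundary data have enough regularity for the parabolic Schauder machinery to start — in particular that $(a,b)\in H^1$ already forces $\dot a^k,\dot b^k\in L^4$ via \eqref{kkkbc2} and the trace embedding \eqref{trace}, and that no loss occurs at the corners $(\delta,0)$ and $(\delta,1)$ because we never require compatibility there, only regularity on a slightly smaller time interval. Everything after that is the routine iteration sketched above. Combined with the already-constructed solution of Proposition \ref{freeexistence}, and recalling from Section 3 that for a.e.\ $r>0$ the solution $\kappa(r,\cdot)$ restricted to the unfaceted intervals is a valid regular datum (by Theorem \ref{basicthm}, $\kappa\in L^2(\delta,T;H^1(\T))$, so $\kappa(r,\cdot)\in H^1$ for a.e.\ $r$, and it satisfies the compatibility condition of Proposition \ref{freeexistence}), this yields Theorem \ref{regularthm}.
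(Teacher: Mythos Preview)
Your approach is the same bootstrap as the paper's, and the overall structure is correct. There is, however, one concrete technical step you skip that the paper supplies: your base case ends with $(a,b)\in W^{1,4}(\delta,t_*)\hookrightarrow C^{3/4}$, i.e.\ $\dot a^k,\dot b^k\in L^4$ only, yet your inductive step assumes $(a,b)\in C^{m,\beta}$ and asserts the coefficients are $C^{m-1,\beta}$ in $t$. For $m=0$ this is vacuous, and for $m=1$ you would need $\dot a^k,\dot b^k$ already H\"older, which you have not yet established; thus the Schauder machinery cannot start directly from your base case, since the drift $\tfrac{1}{|I^k|}\Phi^k_{a,b,t}$ and source $f^k_{a,b,t}$ are only $L^4$ in time.

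The paper closes precisely this gap by inserting one $L^p$-parabolic step before Schauder: with $(\dot a,\dot b)\in L^4$, it invokes \cite[Chapter IV, Theorem 9.1]{lsu} to put $\kkk$ in $W^{1,4}(\delta_0,t_*;L^4(I))\cap L^4(\delta_0,t_*;W^{2,4}(I))$, whose spatial-boundary traces of $\kkk_x$ lie in $W^{3/8,4}(\delta_0,t_*)\hookrightarrow C^{1/8}$; this yields $(\dot a,\dot b)\in C^{1/8}$, i.e.\ $(a,b)\in C^{1,1/8}$, and only then the H\"older bootstrap via \cite[Chapter IV, Theorem 5.2]{lsu} runs exactly as you describe. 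You flagged this as ``the main obstacle'' but did not actually carry out the passage from $L^4$ to H\"older for the time derivatives; adding that single $L^p$-regularity step makes your proof complete and identical in substance to the paper's.
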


\begin{proof}
 We perform a bootstrap procedure. First, we note that as the traces $\kkk(\cdot,0), \kkk(\cdot, 1)$ of solution constructed in Proposition \ref{freeexistence} belong to $L^4(0,t_*)$, also $(\dot a, \dot b) \in L^4(0,t_*)^{2n}$. Thus, cutting off the initial datum, we may solve (\ref{kkkeqn}, \ref{kkkbc1}) in 
 $$\left(W^{1,4}(\delta_0,t_*,L^4(I))\cap L^4(\delta_0,t_*,W^{2,4}(I))\right)^n$$ 
 with some $\delta_0 < \delta$ \cite[Chapter IV, Theorem 9.1]{lsu}. The traces $\kkk(\cdot,0), \kkk(\cdot, 1)$ of functions in this space (and therefore also $(\dot a, \dot b)$) belong to $W^{\frac{3}{8}, 4}(\delta_0, t_*)$ \cite[Chapter II, Lemma 3.4]{lsu} which in turn embeds continuously in $C^\frac{1}{8}([\delta_0, t_*])$ \cite[Theorem 8.2]{hitch}. Now we repeatedly apply \cite[Chapter IV, Theorem 5.2]{lsu}. Given coefficients and external force of \eqref{kkkeqn} in parabolic H\" older class $C^{\frac{k}{2} + \frac{1}{8}, k + \frac{1}{4}}([\delta_k,t_*]\times I)$, $k=0, 1,\ldots$, we yield the solution in 
 $$C^{\frac{k+2}{2} + \frac{1}{8}, k+2 + \frac{1}{4}}([\delta_{k+1},t_*]\times I)$$ 
 with $\delta_{k} < \delta_{k+1} < \delta$. The traces $\kkk(\cdot,0), \kkk(\cdot, 1)$ of any element of this space belong to $C^{\frac{k+1}{2} + \frac{1}{8}}([\delta_{k+1},t_*])$ (see \cite[Exercise 8.8.6]{krylovh}) which raises the regularity of coefficients and force of \eqref{kkkeqn} one step and allows the procedure to continue. The $C^\infty$ regularity is preserved in the passage to the solution to (\ref{free}-\ref{freebc2}). 
\end{proof}

 \section*{Acknowledgements} 
 The author wishes to express his gratitude towards Piotr Mucha for suggesting the problem, encouragement and helpful discussions. Special thanks are also due to Jose Maz{\'o}n for his informative comments. 
 
 The work has been supported by the grant no.\;2014/13/N/ST1/02622 of the National Science Centre, Poland.

 \bibliographystyle{plain}
 \bibliography{aniso}
\end{document}